\numberwithin{equation}{section}
\theoremstyle{plain}
\newtheorem{theorem}{Theorem}[section]
\newtheorem{corollary}[theorem]{Corollary}
\newtheorem{lemma}[theorem]{Lemma}
\newtheorem{proposition}[theorem]{Proposition}
\theoremstyle{definition}
\newtheorem{definition}[theorem]{Definition}
\theoremstyle{remark}
\newtheorem{remark}[theorem]{Remark}
\newcommand{\opt}{\rm Opt}
\newcommand{\geo}{\rm Geo}
\newcommand{\lmt}[2]{\mathop{\lim}_{{#1} \rightarrow {#2}} }
\newcommand{\lmts}[2]{\mathop{\overline{\lim}}_{{#1} \rightarrow {#2}} }
\newcommand{\mm}{\mathfrak m}
\newcommand{\ms}{(X,\d,\mm)}
\newcommand{\cdkn}{{\rm CD}(K, N)}
\newcommand{\bekn}{{\rm BE}(K, N)}
\newcommand{\mcp}{{\rm MCP}(K, N)}
\newcommand{\N}{\mathbb{N}}
\newcommand{\R}{\mathbb{R}}
\newcommand{\supp}{\mathop{\rm supp}\nolimits}   
\newcommand{\Lip}{\mathop{\rm Lip}\nolimits}
\renewcommand{\d}{{\mathrm d}}
\newcommand{\dt}{{\d t}}
\newcommand{\ddt}{{\frac \d\dt}}
\newcommand{\D}{{\mathrm D}}
\newcommand{\restr}[1]{\lower3pt\hbox{$|_{#1}$}}
\newcommand{\nchi}{{\raise.3ex\hbox{$\chi$}}}
\title{\large{\bf Sharp  $p$-Poincar\'e inequalities under  measure contraction property}
}
\begin{document}
\author{Bang-Xian Han\thanks{Department of Mathematics, Technion-Israel Institute of Technology, Haifa 32000, Israel. Email: hanbangxian@gmail.com.
}
}

\date{\today} 
\maketitle

%
\begin{abstract}
We obtain sharp  estimate on  $p$-spectral gaps,  or equivalently optimal constant in $p$-Poincar\'e inequalities,    for  metric measure spaces satisfying measure contraction property.   We also   prove the  rigidity for the sharp $p$-spectral gap.
\end{abstract}

\textbf{Keywords}:  $p$-Poincar\'e inequality, $p$-spectral gap, $p$-Obata theorem, curvature-dimension condition,   measure contraction property, metric measure space.\\
\tableofcontents

\section{Introduction}
Sharp estimates on  spectral gap for $p$-Laplacian, or equivalently, the optimal constant in $p$-Poincar\'e inequalities is a classical problem in comparison geometry. It addresses the following basic problem.   Given a family $\mathcal F:=\{(X_\alpha, \d_\alpha, \mm_\alpha): \alpha \in \mathcal A\}$  of metric measure spaces, the corresponding optimal constant $\lambda_{\mathcal F}$  in $p$-Poincar\'e inequalities  is defined  by
\begin{equation}\label{eq1:intro}
\lambda_{\mathcal F}:= \mathop{\inf}_{\alpha \in \mathcal A} \inf \left \{\frac {\int_{X_\alpha}  |{\nabla }_{ \d_\alpha} f|^p \,\d \mm_\alpha}{\int_{X_\alpha} |f|^p  \,\d \mm_\alpha}: f \in \Lip \cap L^p, \int_{X_\alpha} f|f|^{p-2}  \,\d \mm_\alpha=0, f\neq 0 \right \},
 \end{equation}
 where the local Lipschitz constant  $ |{\nabla }_{ \d_\alpha} f|: X_\alpha \mapsto \R$ is defined by
 \[
  |{\nabla }_{ \d_\alpha} f|(x):=\lmts{y}{x} \frac{|f(y)-f(x)|}{\d_{\alpha}(y, x)}.
 \]

One of the most studied  families of metric measure spaces  is  Riemannian manifolds with lower  Ricci curvature bound $K\in \R$, upper dimension bound $N>0$  and diameter bound $D>0$.  In this case,  $\lambda_{\mathcal F}$ is the minimum of all  first positive eigenvalues of the  {$p$}-Laplacian (assuming Neumann boundary conditions if the boundary is not empty).  Based on a refined gradient comparison technique and a careful analysis of the underlying model spaces, sharp estimate on the first eigenvalue of the  {$p$}-Laplacian was finally  obtained by Valtorta and Naber in \cite{Valtorta12, NV14}.

Another important family is weighted Riemannian manifolds (called smooth metric measure spaces) satisfying   $\bekn$ curvature-dimension condition  \`a la Bakry-\'Emery \cite{B-H, BE-D}.  More generally, thanks to the deveploment of optimal transport theory, it was realized that  Bakry-\'Emery's   condition in smooth setting can  be equivalently characterized by  convexity  of an entropy functional along $L^2$-Wasserstein geodesics  (c.f. \cite{CMS01} and  \cite{SVR-T}). In this direction, metric measure spaces satisfying  $\cdkn$ condition  was introduced by Lott-Villani \cite{Lott-Villani09} and Sturm \cite{S-O1, S-O2}.   This class of metric measure spaces with synthetic lower Ricci curvature bound and upper  dimension  bound includes  the previous smooth examples,  and is  closed in  the measured Gromov-Hausdorff topology. Recently, using  measure decomposition technique on Riemannian manifolds developed by Klartag \cite{KlartagNeedle} (and by Cavalletti-Mondino  \cite{CM-ISO} on metric measure spaces),  sharp  $p$-Poincar\'e inequalities under the $\bekn$ condition and  the $\cdkn$ condition have been obtained by E. Calderon in his Ph.D thesis \cite{Calderon18}.

In addition, Measure Contraction Property $\mcp$ was introduced independently by Ohta  \cite{Ohta-MCP} and Sturm  \cite{S-O2} as a weaker variant of $\cdkn$ condition. 
The family $\mcp$ is strictly  larger  than $\cdkn$. It was discovered by Juillet \cite{Juillet09}  that the $n$-th Heisenberg group equipped with the left-invariant measure, which is the simplest sub-Riemannian space, does not satisfy any  $\cdkn$ condition  but do satisfy ${\rm MCP}(0,N)$  for  $N\geq 2n+3$.
More recently, interpolation inequalities \`a la Cordero-Erausquin–McCann–Schmuckenshl\"ager \cite{CMS01} were obtained, under suitable modifications, by Barilari and Rizzi \cite{BarilariRizzi18} in the ideal sub-Riemannian setting, Badreddine and Rifford \cite{BR-MCP} for Lipschitz Carnot group,  and by Balogh, Krist\'aly and Sipos  \cite{BKS18} for the Heisenberg group.  As a consequence, more and more  examples of spaces verifying MCP but not CD have been found, e.g. the generalized H-type groups and the Grushin plane (for more details, see \cite{BarilariRizzi18}).

In \cite{HM-MCP},   the author and E. Milman proved a sharp Poincar\'e inequality for subsets  of (essentially non-branching) $\mcp$  metric measure spaces, whose diameter is bounded from above by $D$. 
The current paper is a subsequent work of  \cite{HM-MCP}.   We will study  the general  $p$-poincar\'e  inequality  within the class of spaces verifying measure contraction property.  Thanks to measure decomposition theorem (c.f. Theorem 3.5 \cite{CM-Laplacian}), it suffices to study the corresponding eigenvalue problems on  one-dimensional model spaces introduced by E. Milman \cite{Milman-JEMS}.  In particular, we identify a family of one-dimensional $\mcp$-densities with diameter $D$, not verifying $\cdkn$, achieving the optimal constant $\lambda^p_{K, N, D}$.

As a basic problem in metric geometry, the rigidity theorem helps us to understand more about the spaces under study.    For the family of metric measure spaces satisfying ${\rm RCD}(K, N)$  condition with $K>0$,  a  space that reaches  the equality  in  \eqref{eq1:intro} must have maximal diameter $\pi \sqrt{\frac{N-1}K}$.  By maximal diameter theorem this space  is isomorphic  to a spherical suspension (see \cite{CM-S}  and references therein for details).   For  $\mcp$ spaces,  the situation is very different. For $K > 0$, due to lack of monotonicity, we do not know whether a space that  reaches the minimal spectrum has maximal diameter. For  $K > 0$,  by monotonicity (Proposition \ref{lemma1}) and  one-dimensional rigidity (Theorem \ref{th-rigidity})    we can  prove the   rigidity  Theorem \ref{th2}.

\bigskip

\noindent \textbf{Acknowledgement}:  This research is part of a project which has received funding from the European Research Council (ERC) under the European Union's Horizon 2020 research and innovation programme (grant agreement No. 637851).  The author thanks  Emanuel Milman for helpful discussions and comments.

\section{Prerequisites}
Let $(X, \d)$ be a complete metric space and $\mm$ be a locally finite Borel measure with full support. Denote by $\geo(X,\d)$ the space of geodesics. We say that a set $\Gamma \subset  \geo(X,\d)$  is non-branching  if  for any $\gamma^1, \gamma^2 \in \Gamma$, it holds:
\[
\exists t \in (0,1) ~~\text{s.t.}~ ~ \gamma_s^1 =\gamma_s^2,~\forall  s\in [0, t] \Rightarrow  \gamma_s^1 =\gamma_s^2, ~ \forall s\in [0,1].
\]

 Let $(\mu_t) $ be a $L^2$-Wasserstein geodesic.   Denote by $\opt\geo(\mu_0, \mu_1)$ the space of all   probability measures $\Pi \in    \mathcal{P}(\geo(X,\d))$  such that  $(e_t)_\sharp \Pi=\mu_t$ (c.f.  Theorem 2.10 \cite{AG-U}) where $e_t$ denotes the evaluation map $e_t(\gamma):=\gamma_t$.  We say that  $\ms$ is  essentially non-branching if for any $\mu_0, \mu_1 \ll \mm$,  any  $\Pi \in  \opt\geo(\mu_0, \mu_1)$ is  concentrated on a set of non-branching geodesics.

It is clear that if $(X, \d)$ is a smooth Riemannian manifold then any subset $\Gamma \subset  \geo(X,\d)$ is a set of non-branching geodesics, in particular any smooth Riemannian manifold is essentially non-branching. In addition,  many sub-Riemannian spaces are also  essentially non-branching, which follows from the existence and uniqueness of the optimal transport map on some ideal sub-Riemannian manifolds (c.f. \cite{FigalliRifford10}).

Given $K, N\in \R$, with $N > 1$, we set for $(t, \theta) \in [0, 1] \times \R^+$,
\[
 \sigma^{(t)}_{K, N}   \big(\theta):=\left\{\begin{array}{llll}
\infty, &\text{if}~~ K\theta^2 \geq (N-1)\pi^2,\\
\frac{\sin(t\theta \sqrt{K/(N-1)})}{\sin(\theta \sqrt{K/(N-1)})}, &\text{if}~~ 0<K\theta^2<(N-1)\pi^2,\\
t, &\text{if}~~K\theta^2=0,\\
\frac{\sinh(t\theta \sqrt{-K/(N-1)})}{\sinh(\theta \sqrt{-K/(N-1)})}, &\text{if} ~~ K\theta^2<0.
\end{array}
\right.
\]
and
\[
 \tau^{(t)}_{K, N} :=t^{\frac 1N} \Big ( \sigma^{(t)}_{K, N-1}     \Big )^{1-\frac1N}.
\]
\begin{definition}[Measure Contraction Property $\mcp$]
We say that an essentially non-branching metric measure space $\ms$ satisfies measure contraction property $\mcp$ if for any point  $o\in X$ and Borel set $A\subset X$ with $0<\mm(A)<\infty$ (and with $A \subset B(o, \sqrt{(N-1)/K}$ if $K>0$),   there is $\Pi \in \opt\geo(\frac1{\mm(A)}\mm\restr{A}, \delta_o)$ such that the following inequality holds for all $t\in [0,1]$
\begin{equation}\label{eq-MCP}
\frac1{\mm(A)} \mm  \geq (e_t)_\sharp \big[ \tau^{(1-t)}_{K, N}  \big(\d(\gamma_0, \gamma_1)\big)^N\, \Pi(\d \gamma) \big].
\end{equation}
\end{definition}

\begin{theorem}[Localization for $\mcp$ spaces, Theorem 3.5 \cite{CM-Laplacian}]\label{prop-l1}
Let $\ms$ be an essentially non-branching metric measure space  satisfying $\mcp$ condition for some $K\in \R$ and $N \in (1, \infty)$. Then
for any 1-Lipschitz function $u$ on $X$, the non-branching transport set $\mathsf T_u$ associated with $u$ (roughly speaking,  $\mathsf T_u$ coincides with
 $\{|\nabla u|=1\}$ up to $\mm$-measure zero set) admits a disjoint family of unparameterized geodesics  $\{X_q\}_{q \in \mathfrak Q}$ such that 
\[
\mm(\mathsf T_u  \setminus \cup X_q)=0,
\]
and 
\[
\mm\restr{\mathsf T_u}=\int_{\mathfrak{Q}} \mm_q\, \d \mathfrak{q}(q),~~\mathfrak{q}(\mathfrak{Q})=1~~\text{and}~~\mm_q(X_q)=1~~\mathfrak{q}-\text{a.e.}~ q \in \mathfrak Q.
\]
Furthermore, for $\mathfrak{q}$-a.e. $q\in \mathfrak{Q}$, $\mm_q$ is a Radon measure with  $\mm_q \ll  \mathcal H^1 \restr{X_q}$  and
$(X_q, \d, \mm_q)$ satisfies $\mcp$.
\end{theorem}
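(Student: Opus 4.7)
The plan is to build the decomposition $\{X_q\}_{q\in\mathfrak Q}$ from the natural transport relation induced by the $1$-Lipschitz function $u$, apply a measurable disintegration of $\mm\restr{\mathsf T_u}$ along its fibres, and transfer the $\mcp$ condition from the ambient space to each one-dimensional slice. This is the Klartag-style needle decomposition adapted to the synthetic metric measure setting, and the essentially non-branching hypothesis is what allows the argument to survive outside the smooth category.

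First, I would introduce the transport relation $R_u:=\{(x,y)\in X\times X: u(x)-u(y)=\d(x,y)\}$, recording that $x$ and $y$ lie on a common transport ray of $u$ with $y$ upstream. On the set $\{|\nabla u|=1\}$ the symmetrised relation $R_u\cup R_u^{-1}$ fails to be an equivalence relation only at forward or backward branch points; this is exactly where essential non-branching enters, since combined with standard optimal transport selection arguments it forces the branch set to be $\mm$-negligible. Removing it produces $\mathsf T_u$, on which the equivalence classes are a.e.\ well-defined unparameterised geodesics $X_q$, and a Souslin-type measurable selection yields a measurable quotient map $\mathsf T_u\to\mathfrak Q$. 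The abstract disintegration theorem for Radon measures on Polish spaces then delivers the identity $\mm\restr{\mathsf T_u}=\int\mm_q\,\d\mathfrak q(q)$ with $\mm_q$ concentrated on $X_q$ and $\mathfrak q$ the induced quotient measure.

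Second, I would establish $\mm_q\ll \mathcal H^1\restr{X_q}$ for $\mathfrak q$-a.e.\ $q$. The idea is to apply the ambient $\mcp$ inequality \eqref{eq-MCP} to thin tubular neighbourhoods of a short subarc of a ray, contracted onto a single endpoint along the transport rays themselves. The explicit volume bound produced by $\tau^{(1-t)}_{K,N}$ controls how mass redistributes under this contraction, and a Fubini argument along the disintegration rules out concentration of $\mm_q$ on atoms or on $\mathcal H^1$-null subsets of $X_q$.

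Third, and this is the genuinely hard step, I would verify that $(X_q,\d,\mm_q)$ satisfies the one-dimensional $\mcp$ for $\mathfrak q$-a.e.\ $q$. Given Borel $A\subset X_q$ with $0<\mm_q(A)<\infty$ and a target point $o\in X_q$, the fibrewise strategy is to construct the obvious one-dimensional geodesic plan inside $X_q$ joining $\frac{1}{\mm_q(A)}\mm_q\restr{A}$ to $\delta_o$, and then, via the disintegration, glue these plans together into an optimal geodesic plan on $X$ whose ray-by-ray restriction coincides with the fibrewise plan. Applying the ambient $\mcp$ inequality to this glued plan and disintegrating the resulting measure bound in $q$ produces the desired one-dimensional inequality almost everywhere. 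The main obstacle is the measurability and consistency of this lifting: one must verify that the glued plan really is concentrated on geodesics and that its marginals match correctly, and essential non-branching has to be invoked once more to prevent geodesics associated with different values of $q$ from interfering, so that the ambient contraction bound decouples cleanly into per-ray inequalities.
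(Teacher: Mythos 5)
The paper does not prove this statement at all: Theorem~\ref{prop-l1} is imported verbatim as a prerequisite and attributed to Theorem~3.5 of Cavalletti--Mondino \cite{CM-Laplacian}, and the surrounding localization machinery (branching analysis, disintegration, inheritance of curvature conditions by conditional measures) is developed in \cite{CM-Laplacian}, \cite{CM-ISO} and, in the smooth setting, \cite{KlartagNeedle}. There is therefore no ``paper's own proof'' to compare your attempt against; what you have written is a sketch of a theorem that the paper treats as a black box.

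As an account of the Cavalletti--Mondino strategy your outline has the right shape, but it is underspecified exactly at the points where the real work lies, so it would not function as a self-contained proof. Two concrete gaps. First, ``standard optimal transport selection arguments'' do not, by themselves, force the branch set inside $\{|\nabla u|=1\}$ to be $\mm$-negligible: this is one of the genuinely delicate parts of the theory, and it requires a careful interplay between the essentially non-branching hypothesis, a measurable selection of optimal geodesic plans, and the $\mcp$ inequality itself to rule out mass on forward/backward branch points; one must also distinguish the transport set from the transport set with endpoints and show the endpoint set is negligible. Second, the ``glue fibrewise plans into an ambient optimal plan'' step is where your sketch is weakest. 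In the actual proof the direction of reasoning is essentially the reverse: one applies the ambient $\mcp$ inequality to a well-chosen set $A$ and target $o$, uses essential non-branching to show the resulting geodesic plan is supported on the ray fibres (no cross-ray transport), and then disintegrates the contraction estimate; making your gluing construction produce a plan that is simultaneously concentrated on geodesics, optimal, and compatible with the disintegration requires a nontrivial measurable-selection argument that you only gesture at. If the goal is to fill in this theorem, you should reproduce the argument of \cite{CM-Laplacian} rather than attempt a from-scratch sketch; if the goal is to understand the paper under review, you should note that Theorem~\ref{prop-l1} is a cited input, not something the paper establishes.
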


\section{One dimensional models}
\subsection{One dimensional MCP densities}
Let  $h\in L^1(\R^+, \mathcal L^1)$ be a non-negative Borel function. It is  known (see e.g. Lemma 4.1 \cite{HM-MCP})  that $(\supp h, |\cdot|,  h \mathcal L^1)$ satisfies $\mcp$ condition if and only if  $h$  is a $\mcp$ density in the following sense
\begin{equation}\label{eq:mcp}
h(tx_1+(1-t)x_0) \geq \sigma^{(1-t)}_{K, N-1} (|x_1-x_0|)^{N-1} h(x_0)
\end{equation}
for all $x_0, x_1 \in \supp h$ and $t\in [0,1]$.

\begin{definition}
Given $K \in \R,  N>1$.  Denote by $D_{K,N}$ the Bonnet--Meyers diameter upper-bound:
\begin{equation} \label{eq:DKN}
D_{K,N} := \left\{\begin{array}{lll} \frac{\pi}{\sqrt{K / (N-1)}} & \text{if}~~ K > 0 \\
+\infty & \text{otherwise} 
\end{array} \right .  .
\end{equation}
 For any $D>0$, we define $ \mathcal F_{K, N, D}$ as the collection of $\mcp$ densities $h\in L^1(\R^+, \mathcal L^1)$ with $\supp h=[0, D\land D_{K, N}]$.

\end{definition}

For $\kappa \in \R$, we define the function $s_\kappa: [0, +\infty) \mapsto \R$ (on $[0, \pi/ \sqrt{\kappa})$ if $\kappa >0$)
\[
s_\kappa(\theta):=\left\{\begin{array}{lll}
(1/\sqrt {\kappa}) \sin (\sqrt \kappa \theta), &\text{if}~~ \kappa>0,\\
\theta, &\text{if}~~\kappa=0,\\
(1/\sqrt {-\kappa}) \sinh (\sqrt {-\kappa} \theta), &\text{if} ~~\kappa<0.
\end{array}
\right.
\]

It can be seen that  \eqref{eq:mcp}  is equivalent to
\begin{equation}\label{eq1}
\left (\frac {s_{K/(N-1)}(b-x_1)}{s_{K/(N-1)}(b-x_0)} \right )^{N-1} \leq \frac{h(x_1)}{h(x_0)} \leq \left (\frac {s_{K/(N-1)}(x_1-a)}{s_{K/(N-1)}(x_0-a)} \right )^{N-1}
\end{equation}
for all $ [x_0,  x_1] \subset [a, b] \subset  \supp h$.

Furthermore, we have the following characterization.
\begin{lemma}\label{lemma:mcp}
Given $D \leq D_{K, N}$, a  density $h$ is in $ \mathcal F_{K, N, D}$ if and only if
\begin{equation}\label{eq1.1}
\left (\frac {s_{K/(N-1)}(D-x_1)}{s_{K/(N-1)}(D-x_0)} \right )^{N-1} \leq \frac{h(x_1)}{h(x_0)} \leq \left (\frac {s_{K/(N-1)}(x_1)}{s_{K/(N-1)}(x_0)} \right )^{N-1}~~~\forall ~0\leq x_0 \leq x_1 \leq D.
\end{equation}

Furthermore,  $h\in \mathcal F_{K, N, D}$ if and only if  $\ln h$ is  $\mathcal L^1$-a.e.  differentiable  and
\[
 -h(x) \cot_{K, N, D}(D-x) \leq h'(x)\leq h(x) \cot_{K, N, D}(x), ~~\mathcal L^1-\text{a.e.}~x\in [0, D]
\]
where the function $\cot_{K, N, D}: [0, D] \mapsto \R$ is defined  by
\[
\cot_{K, N, D}(x):=\left\{\begin{array}{lll}
\sqrt{K(N-1)} \cot (\sqrt {\frac{K}{N-1}} x), &\text{if}~~ K>0,\\
({N-1})/x, &\text{if}~~K=0,\\
\sqrt{-K(N-1)} \coth(\sqrt {\frac{-K}{N-1}} x), &\text{if} ~~K<0.
\end{array}
\right.
\]
\end{lemma}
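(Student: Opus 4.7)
The first equivalence amounts to showing \eqref{eq1.1} is equivalent to \eqref{eq1}, since by the definitions $h \in \mathcal{F}_{K,N,D}$ iff $\supp h = [0,D]$ (using $D \leq D_{K,N}$) and $h$ satisfies \eqref{eq1} for every $[x_0,x_1]\subset[a,b]\subset[0,D]$. The ``only if'' direction is immediate: specializing \eqref{eq1} to $a=0$, $b=D$ gives exactly \eqref{eq1.1}. For the converse, the key observation I would exploit is that the logarithmic derivative $\rho(u):= s'_{K/(N-1)}(u)/s_{K/(N-1)}(u)$ is non-increasing on $(0,D_{K,N})$; indeed, case by case, $\rho$ equals $\sqrt{\kappa}\cot(\sqrt{\kappa}u)$, $1/u$, or $\sqrt{-\kappa}\coth(\sqrt{-\kappa}u)$ according to the sign of $K=(N-1)\kappa$, and all three are decreasing on their domains.

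From the monotonicity of $\rho$ I would deduce two auxiliary comparisons: for fixed $0\leq x_0<x_1$, the ratio $b\mapsto s_{K/(N-1)}(b-x_1)/s_{K/(N-1)}(b-x_0)$ is non-decreasing in $b$ on $[x_1, D_{K,N})$, and $a\mapsto s_{K/(N-1)}(x_1-a)/s_{K/(N-1)}(x_0-a)$ is non-decreasing in $a$ on $[0,x_0]$. Both reduce, after logarithmic differentiation in the free parameter, to the monotonicity of $\rho$ evaluated at two shifted points. Hence for any $0\leq a\leq x_0\leq x_1\leq b\leq D$ the two ratios appearing in \eqref{eq1} are controlled, in the appropriate direction, by the corresponding ratios in \eqref{eq1.1}, and the bounds in \eqref{eq1} follow.

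For the differential characterization, assume first $h\in \mathcal{F}_{K,N,D}$ is not identically zero; then \eqref{eq1.1} forces $h$ to be strictly positive and locally Lipschitz on $(0,D)$, so $\ln h$ is locally absolutely continuous and in particular $\mathcal{L}^1$-a.e.\ differentiable. Dividing \eqref{eq1.1} by $x_1-x_0$ and passing to the limit at a point of differentiability yields the two pointwise bounds on $h'$, since a direct computation gives $\cot_{K,N,D}(x)=(N-1)\rho(x)$ and $-\cot_{K,N,D}(D-x)$ is $(N-1)$ times the derivative of $\ln s_{K/(N-1)}(D-x)$. Conversely, from the differential inequalities one reads off a local Lipschitz bound on $h$ on compact subsets of $(0,D)$; integrating $(\ln h)'$ against the explicit primitives $(N-1)\ln s_{K/(N-1)}(x)$ and $-(N-1)\ln s_{K/(N-1)}(D-x)$ then recovers \eqref{eq1.1}.

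The main technical subtlety in both directions of the differential part is the regularity bookkeeping: one needs \emph{local absolute continuity} of $\ln h$, not only a.e.\ differentiability, to translate between the pointwise and integral forms through the fundamental theorem of calculus. In each direction this is extracted from the local Lipschitz bounds supplied respectively by \eqref{eq1.1} itself and by the stated differential inequalities.
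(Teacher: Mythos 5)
Your proof follows essentially the same route as the paper: you establish the equivalence of \eqref{eq1.1} with \eqref{eq1} by checking that the two ratios are monotone in the free endpoints $a$ and $b$ (which, as you note, reduces after logarithmic differentiation to the monotonicity of $\rho = s'_{K/(N-1)}/s_{K/(N-1)}$), and you derive the differential characterization by observing that \eqref{eq1.1} amounts to the monotonicity of $x \mapsto s_{K/(N-1)}(D-x)^{N-1}/h(x)$ and $x \mapsto s_{K/(N-1)}(x)^{N-1}/h(x)$, then differentiating. The identity $\cot_{K,N,D}(x) = (N-1)\rho(x)$ that you invoke is correct.

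One small point deserves comment. You rightly flag that the converse of the differential characterization needs local absolute continuity of $\ln h$, not merely a.e.\ differentiability. But your suggestion that this regularity ``is extracted from\ldots\ the stated differential inequalities'' does not actually close the gap: a function that is $\mathcal{L}^1$-a.e.\ differentiable with an a.e.\ bounded derivative can still fail to be absolutely continuous (a Cantor-type singular component is compatible with the hypotheses), so one cannot deduce local Lipschitz continuity of $\ln h$ from the a.e.\ pointwise bounds alone. For the forward direction this is harmless, since \eqref{eq1.1} itself forces $\ln h$ to be locally Lipschitz. For the converse, however, the lemma's hypothesis as literally written is too weak; the clean fix is to read ``$\ln h$ is locally absolutely continuous'' (or locally Lipschitz) in place of ``$\mathcal{L}^1$-a.e.\ differentiable.'' This imprecision is shared with the paper's own statement and proof, which silently treats $\ln h$ as absolutely continuous at exactly this step, so it is a shortcoming of the source formulation rather than of your argument specifically.
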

\begin{proof}
It can be checked that the function
\[
a  \mapsto \frac {s_{K/(N-1)}(x_1-a)}{s_{K/(N-1)}(x_0-a)}
\] is non-decreasing on $[0, x_0]$, and the function
\[
b  \mapsto \frac {s_{K/(N-1)}(b-x_1)}{s_{K/(N-1)}(b-x_0)}
\] is non-decreasing on $[x_1, D]$. Thus, \eqref{eq1.1} follows from  \eqref{eq1}.

Furthermore, for any $h\in \mathcal F_{K, N, D}$, it can be seen that \eqref{eq1.1} holds if and only if
\begin{equation}\label{eq:mcp-1}
x \mapsto \frac {\left ( {s_{K/(N-1)}(D-x)}\right )^{N-1}}{h(x)} ~~\text{is non-increasing},
\end{equation}
and 
\begin{equation}\label{eq:mcp-2}
x \mapsto \frac {\left ( {s_{K/(N-1)}(x)}\right )^{N-1}}{h(x)} ~~\text{is non-decreasing}.
\end{equation}
  From \eqref{eq1.1} we can see that  $\ln h$ is locally  Lipschitz, so $\ln h$ is differentiable almost everywhere.  So,  by \eqref{eq:mcp-1} and \eqref{eq:mcp-2}  we know \eqref{eq1.1} is equivalent to
$$
\Big ( \ln s^{N-1}_{K/(N-1)}(D-\cdot)  \Big ) ' \leq (\ln h)'=\frac  {h'}h \leq \Big ( \ln s^{N-1}_{K/(N-1)} \Big )'  ~~~~~\mathcal L^1-\text{a.e.~on}~ [0, D]
 $$ 
 which is the thesis.
\end{proof}

Notice  that the function
\[
[0, D] \ni x \mapsto \frac {s_{K/(N-1)}(D-x)}{s_{K/(N-1)}(x)}
\] is decreasing. 
By Lemma \ref{lemma:mcp} (or  \eqref{eq:mcp-1} and \eqref{eq:mcp-2} )  we immediately obtain the following rigidity result.

\begin{lemma}[One dimensional rigidity]\label{lemma:rigidity}
Denote  $h^1_{K, N, D}=\left ( {s_{K/(N-1)}(x)} \right )^{N-1}\restr{[0, D]}$ and $h^2_{K, N, D}= \left ( {s_{K/(N-1)}(D-x)} \right )^{N-1}\restr{[0,D]}$.
Then we have  $h^1_{K, N, D}, h^2_{K, N, D} \in \mathcal F_{K, N, D}$. Furthermore,  $h^1_{K, N, D}$  is the unique  $\mathcal F_{K, N, D}$ density (up to multiplicative constants) satisfying 
\[
h'(x)=h(x) \cot_{K, N, D}(x)
\]
and $h^2_{K, N, D}$  is the unique $\mathcal F_{K, N, D}$ density satisfying
\[
h'(x)=-h(x) \cot_{K, N, D}(D-x).
\]
\end{lemma}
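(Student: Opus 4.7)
The plan is to deduce everything directly from the differential characterization established in Lemma \ref{lemma:mcp}. For the membership part, I compute $(\ln h^1_{K,N,D})'(x) = (N-1) s'_{K/(N-1)}(x)/s_{K/(N-1)}(x) = \cot_{K,N,D}(x)$, so $h^1_{K,N,D}$ saturates the upper derivative bound. To verify the lower bound I need
\[
\cot_{K,N,D}(x) \;\geq\; -\cot_{K,N,D}(D-x) \qquad \text{on } (0,D),
\]
and this is exactly the statement that the function $x\mapsto \ln\bigl(s_{K/(N-1)}(x)/s_{K/(N-1)}(D-x)\bigr)$ has non-negative derivative, which follows from the monotonicity of $x\mapsto s_{K/(N-1)}(D-x)/s_{K/(N-1)}(x)$ already noted right before the lemma. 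The membership of $h^2_{K,N,D}$ is then obtained by the symmetry $x \leftrightarrow D-x$, so that $h^2$ saturates the lower bound instead of the upper one.

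For uniqueness, I would argue as follows. Suppose $h\in\mathcal F_{K,N,D}$ satisfies $h'(x)=h(x)\cot_{K,N,D}(x)$ almost everywhere. From Lemma \ref{lemma:mcp} the function $\ln h$ is locally Lipschitz on $(0,D)$, hence absolutely continuous, and we may integrate
\[
(\ln h)'(x) \;=\; \cot_{K,N,D}(x) \;=\; \bigl(\ln s^{N-1}_{K/(N-1)}(x)\bigr)'
\]
on any compact subinterval of $(0,D)$ to conclude $\ln h(x)-\ln h^1_{K,N,D}(x)$ is constant, i.e.\ $h = C\,h^1_{K,N,D}$. The same ODE argument applied to $x\mapsto D-x$ yields the analogous uniqueness statement for $h^2_{K,N,D}$.

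The only subtle point is checking that $h^1_{K,N,D}$ (respectively $h^2_{K,N,D}$), which saturates one side of the two-sided bound in Lemma \ref{lemma:mcp}, still satisfies the opposite inequality; this is where the monotonicity of $s_{K/(N-1)}(D-\cdot)/s_{K/(N-1)}(\cdot)$ enters decisively. Everything else is a routine integration of an ODE with a Lipschitz (away from the endpoints) right-hand side, so there is no real obstacle beyond bookkeeping the three cases $K>0,\,K=0,\,K<0$ in the definition of $\cot_{K,N,D}$.
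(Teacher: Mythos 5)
Your proof is correct and follows essentially the same route the paper intends: the paper states the lemma is immediate from Lemma \ref{lemma:mcp} together with the monotonicity of $x\mapsto s_{K/(N-1)}(D-x)/s_{K/(N-1)}(x)$, and your argument simply writes out these details (saturating one side of the derivative bound, checking the other side via the quoted monotonicity, and integrating the resulting ODE using local Lipschitzness of $\ln h$).
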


\subsection{One dimensional $p$-Poincar\'e inequalities}
\begin{definition}
For $p \in (1, \infty)$ and $h \in \mathcal F_{K, N, D}$,  the $p$-spectral gap associated with $h$  is defined by
\begin{equation}\label{eq:pgap}
\lambda^{p, h}:= \inf \left \{\frac {\int |u'|^p h\,\d x}{\int |u|^p h \,\d x}: u \in \Lip \cap L^p, \int u|u|^{p-2}  h\,\d x=0, u\neq 0 \right \}.
\end{equation}
\end{definition}

\begin{definition}
Let  $K\in \R$,   $D>0$ and $N>1$. The optimal constant  $\lambda^{p}_{K, N, D}$ is defined as the infimum of  all $p$-spectral gaps associated with admissible densities, i.e.  $\lambda^{p}_{K, N, D}$  is given by
\[
\lambda^{p}_{K, N, D}:=\mathop{\inf}_{h \in \cup_{D' \leq D} \mathcal F_{K, N, D'}} \lambda^{p, h}.
\]
\end{definition}

\begin{proposition}\label{lemma1} Let $K\in \R$,   $D>0$ and $N>1$.  The function $D \mapsto \lambda^{p}_{K, N, D}$ is non-increasing, and
\begin{equation}\label{eq0:lm1}
\lambda^{p}_{K, N, D}=\mathop{\inf}_{h \in \cup_{D' \leq D} \mathcal F_{K, N, D'}\cap C^\infty}  \lambda^{p, h}.
\end{equation}

If $K\leq 0$, the map $D \mapsto \lambda^{p}_{K, N, D}$ is strictly decreasing, and
\begin{equation}\label{eq:lm1}
\lambda^{p}_{K, N, D}=\mathop{\inf}_{h \in \mathcal F_{K, N, D}\cap C^\infty} \lambda^{p, h}.
\end{equation}

\end{proposition}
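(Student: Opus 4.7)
The plan is to handle the four assertions in sequence: non-increase, the smooth-approximation identity \eqref{eq0:lm1}, strict decrease for $K \leq 0$, and \eqref{eq:lm1}. The first is immediate: if $D_1 \leq D_2$, then $\bigcup_{D' \leq D_1} \mathcal F_{K, N, D'} \subseteq \bigcup_{D' \leq D_2} \mathcal F_{K, N, D'}$, so the infimum over the larger class is no larger. For \eqref{eq0:lm1}, the inequality ``$\leq$'' is trivial from inclusion of smooth densities, and I would prove the reverse by approximating an arbitrary $h \in \mathcal F_{K, N, D'}$ by smooth MCP densities. Since $\ln h$ is locally Lipschitz with two-sided pointwise bounds on its a.e. derivative in terms of $\cot_{K, N, D'}$ (Lemma \ref{lemma:mcp}), the strategy is to mollify $\ln h$ with a smooth kernel on the interior $[\eta, D' - \eta]$ and splice it smoothly near the two endpoints with small pieces of the extremal model densities $h^1_{K, N, D'}, h^2_{K, N, D'}$ from Lemma \ref{lemma:rigidity}. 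These models saturate the upper and lower log-derivative bounds, and can be glued to the mollified interior without violating the MCP inequalities after a mild adjustment of the support length. The resulting $h_\delta$ lies in $\mathcal F_{K, N, D''} \cap C^\infty$ for some $D'' \leq D$ close to $D'$, with $h_\delta \to h$ in $L^1$. The upper semicontinuity $\limsup_\delta \lambda^{p, h_\delta} \leq \lambda^{p, h}$ follows by taking a near-minimizer $u$ of \eqref{eq:pgap} for $h$, shifting by a small constant $c_\delta \to 0$ so that $\int (u - c_\delta) |u - c_\delta|^{p-2} h_\delta \,\d x = 0$, and using $u - c_\delta$ as a test function for $\lambda^{p, h_\delta}$.

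For the strict decrease when $K \leq 0$, the key tool is rescaling: for $\alpha \in (0, 1]$ and $h \in \mathcal F_{K, N, D'}$, I claim $\tilde h(x) := h(\alpha x)$ lies in $\mathcal F_{K, N, D'/\alpha}$. Using the characterization \eqref{eq1.1}, both the upper and lower MCP inequalities for $\tilde h$ on $[0, D'/\alpha]$ reduce, after setting $\kappa := K/(N-1)$, to the single monotonicity statement
\[
x \mapsto \ln s_\kappa(\alpha x) - \ln s_\kappa(x) \quad \text{is non-increasing on } (0, \infty).
\]
For $K = 0$ the difference vanishes identically, and for $K < 0$ this reduces to the claim that $f(u) := u \coth(u)$ is non-decreasing on $(0, \infty)$, since the derivative of the displayed function rewrites as $(1/x)[f(\sqrt{-\kappa}\alpha x) - f(\sqrt{-\kappa} x)]$; monotonicity of $f$ follows from $f'(u) = (\tfrac{1}{2}\sinh(2u) - u)/\sinh^2(u) > 0$. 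A change of variables in \eqref{eq:pgap} gives $\lambda^{p, \tilde h} = \alpha^p \lambda^{p, h}$, so rescaling with $\alpha < 1$ strictly decreases the gap. Consequently, given $D_1 < D_2$ and small $\eps > 0$, I choose $h \in \mathcal F_{K, N, D'}$ with $D' \leq D_1$ and $\lambda^{p, h} < \lambda^p_{K, N, D_1} + \eps$, and rescale with $\alpha = D'/D_2 < 1$ to obtain $\tilde h \in \mathcal F_{K, N, D_2}$ with $\lambda^{p, \tilde h} \leq (D_1/D_2)^p (\lambda^p_{K, N, D_1} + \eps) < \lambda^p_{K, N, D_1}$ for $\eps$ small enough. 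Finally, \eqref{eq:lm1} follows by combining \eqref{eq0:lm1} with the same rescaling: any smooth density on $[0, D']$ with $D' \leq D$ rescales to a smooth MCP density on $[0, D]$ with no larger spectral gap.

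The main obstacle is the smooth approximation in \eqref{eq0:lm1}: mollifying $h$ without breaking the asymmetric, singular log-derivative bounds of Lemma \ref{lemma:mcp} near the endpoints forces the splicing with the extremal models and a mild shrinkage of the support, and the continuity of the spectral gap under such approximations (including the implicit solvability of the nonlinear normalization constraint for $c_\delta$) needs care. By contrast, the rescaling argument for $K \leq 0$ rests only on the monotonicity of $u \coth(u)$; for $K > 0$ the analogue $u \cot(u)$ fails to be monotone on $(0, \pi)$, which is consistent with the restriction of the strict decrease to $K \leq 0$ in the statement.
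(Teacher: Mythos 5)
Your proposal is correct and follows essentially the same route as the paper: smooth approximation for \eqref{eq0:lm1} and the rescaling $\tilde h(x)=h(\alpha x)$ with $\alpha<1$ to get strict decrease and \eqref{eq:lm1}. The only cosmetic difference is that the paper records the rescaling as the statement $\bar h\in\mathcal F_{K',N,D}$ with $K'=\alpha^2 K\ge K$ and then invokes $\mathcal F_{K',N,D}\subset\mathcal F_{K,N,D}$, whereas you verify $\tilde h\in\mathcal F_{K,N,D}$ directly via the monotonicity of $u\coth u$ — the same computation in disguise — and you also spell out the mollification/splicing details that the paper delegates to a citation.
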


\begin{proof}
By Lemma \ref{lemma:mcp} we know MCP densities are locally Lipschitz. Thus, using a standard mollifier  we can approximate $h$ uniformly  by smooth MCP densities. 
Then by a simple approximation argument (see e.g. Proposition 4.8 \cite{HM-MCP})  we can prove
\begin{eqnarray*}
\lambda^{p}_{K, N, D}= \mathop{\inf}_{h \in  \cup_{D' \leq D} \mathcal F_{K, N, D'}\cap C^\infty} \lambda^{p, h}.
\end{eqnarray*}

Let  $h \in \mathcal F_{K, N, D'}$ be a MCP density  for some $D'>0$, and $u$ be an admissible function in \eqref{eq:pgap}. Then $\bar h(x):=h(\frac {D'}{D} x) \in \mathcal F_{K', N, D}$  with $K'=\big (\frac {D'}{D} \big )^2 K$, and 
$\bar u(x):=u(\frac {D'}{D} x)$ is also an admissible function. By computation, we have  $\frac {\int |\bar u'|^p \bar h\,\d x}{\int |\bar u|^p \bar h\,\d x}=\big (\frac {D'}{D} \big )^p\frac {\int |u'|^p h\,\d x}{\int |u|^p h\,\d x}$. Therefore,  if $K \leq 0$ and $D' <D$, we have
\[
\mathop{\inf}_{h \in \mathcal F_{K, N, D}} \lambda^{p, h}   \leq  \mathop{\inf}_{h \in \mathcal F_{K', N, D}} \lambda^{p, h}  \leq  \left (\frac {D'}{D} \right  )^p \big (\mathop{\inf}_{h \in \mathcal F_{K, N, D'}} \lambda^{p, h}  \big ) < \mathop{\inf}_{h \in \mathcal F_{K, N, D'}} \lambda^{p, h} 
\]
and so
\[
\lambda^{p}_{K, N, D}  < \lambda^{p}_{K, N, D'}.
\]
 Then we obtain \eqref{eq:lm1}.
\end{proof}

\begin{remark}
The difference between the cases $K \leq 0$ and $K > 0$ was already observed in \cite{CS18} in the isoperimetric context and in \cite{HM-MCP} in the 2-Poincar\'e context.  It is known that the monotonicity property \eqref{eq:lm1} is \textbf{false} when $K> 0$.
\end{remark}

\bigskip

In order to study  the equation \eqref{eq1:lm1} in Theorem \ref{pgap}, we recall some basic facts about generalized trigonometric functions $\sin_p$ and $\cos_p$.
\begin{definition}
For $p \in (1, +\infty)$,  define $\pi_p$ by
\[
\pi_p:= \int_{-1}^1 \frac {\d t}{(1-|t|^p)^{\frac 1p}}=\frac {2\pi} {p \sin (\pi/p)}>0.
\]
The periodic  $C^1$ function $\sin_p: \R \mapsto [-1, 1]$ is defined on $[-\pi_p/2, 3\pi_p/2]$ by:
\begin{equation}
\left\{\begin{array}{ll}
t=\int^{\sin_p(t)}_0  \frac {\d s}{(1-|s|^p)^{\frac 1p}}~~~~~&\text{if}~t\in [-\frac {\pi_p} 2, \frac{\pi_p}2],\\
\sin_p(t)=\sin_p(\pi_p-t)~~~&\text{if}~t\in[\frac{\pi_p}2, \frac{3\pi_p}2].
\end{array}
\right.
\end{equation}
It can be seen that $\sin_p(0)=0$ and $\sin_p$ is strictly increasing on $ [-\frac {\pi_p} 2, \frac{\pi_p}2]$.
Define $\cos_p(t)=\ddt \sin_p(t)$, then we have the following generalized trigonometric identity
\[
|\sin_p(t)|^p+|\cos_p(t)|^p=1.
\]
\end{definition}

\begin{definition}
Let $h^i_{K, N, D}$, $i=1,2$ be $\mcp$ densities defined in Lemma \ref{lemma:rigidity}. Define   $h_{K, N, D}$ by
\begin{equation*}
h_{K, N, D}(x):=\left\{\begin{array}{ll||||}
h^1_{K, N, D}(x) ~~~~~~~~~~~\text{if}~x\in [\frac D 2, D]\\
h^2_{K, N, D}(x) ~~~~~~~~~~~\text{if}~x \in [0, \frac D 2].
\end{array}
\right.
\end{equation*}

Define $T_{K, N, D}$ by
\begin{equation*}
T_{K, N, D}:=\big (\ln h_{K, N, D} \big )'=\left\{\begin{array}{ll||||}
\cot_{K, N, D}(x) ~~~~~~~~~~~~\text{if}~x\in [\frac D 2, D]\\
-\cot_{K, N, D}(D-x) ~~~\text{if}~x \in [0, \frac D 2].
\end{array}
\right.
\end{equation*}
\end{definition}

By Lemma \ref{lemma:mcp}  we know   $h_{K, N, D}$ is a  $\mcp$ density. It can  be seen that   (c.f. Lemma 3.4  \cite{CS18}) $h_{K, N, D}$ does not satisfy any forms of CD  condition.

\begin{theorem}[One dimensional $p$-spectral gap]\label{pgap}
Let  $K\in \R$, $N>1$, $D>0$.  Denote by $\hat \lambda^{p}_{K, N, D}$  the  minimal $\lambda$ such that the following initial value problem  has a solution:
\begin{equation}\label{eq01:pgap}
\left\{\begin{array}{ll}
\varphi'=\Big (\frac \lambda {p-1} \Big )^{\frac 1p}+\frac 1{p-1} T _{K, N, D}\cos_p^{p-1}(\varphi)\sin_p(\varphi),\\
\varphi(0)=-\frac {\pi_p}2,~~\varphi(\frac D2)=0,~~ \varphi(D)=\frac {\pi_p}2.
\end{array}
\right.
\end{equation}
Then 
$\lambda^{p, h} \geq \hat \lambda^{p}_{K, N, D}$ for any $h \in \mathcal F_{K, N, D}$.

\end{theorem}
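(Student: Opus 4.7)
The strategy adapts the gradient comparison method of Kr\"oger and Valtorta--Naber \cite{Valtorta12, NV14} to the MCP framework, using the two-sided bound on $T_h := (\ln h)'$ from Lemma \ref{lemma:mcp} in place of a Bakry--\'Emery Ricci bound. By Proposition \ref{lemma1} I may assume $h \in \mathcal F_{K, N, D} \cap C^\infty$ is strictly positive on $[0, D]$. The minimization \eqref{eq:pgap} is then attained by a non-trivial $C^1$ minimizer $u$ solving the weighted $p$-Laplace Neumann equation
\[
\big(h |u'|^{p-2} u'\big)' + \lambda h |u|^{p-2} u = 0 \quad \text{on } [0, D], \qquad u'(0) = u'(D) = 0,
\]
with $\lambda = \lambda^{p, h}$. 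Standard nodal-domain arguments for the first non-trivial eigenvalue problem show that $u$ is strictly monotone on $[0, D]$ with a unique interior zero $x_0 \in (0, D)$; after rescaling I may assume $u(0) = -m$ and $u(D) = 1$ with $m \in (0, 1]$.

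Next, I introduce the $p$-polar angle $\varphi$ by $u(x) = \sin_p(\varphi(x))$ on $(x_0, D]$, with $\varphi$ monotone increasing from $0$ to $\pi_p/2$, and by $u(x) = m \sin_p(\varphi(x))$ on $[0, x_0)$, with $\varphi$ increasing from $-\pi_p/2$ to $0$. Since the $p$-Laplace equation is invariant under the scaling $u \mapsto \alpha u$, the ODE derived for $\varphi$ has the same form on both arcs. Using the generalized trigonometric identity $|\sin_p|^p + |\cos_p|^p = 1$ and differentiating the $p$-Laplace equation, a direct computation converts it into the first-order ODE
\[
\varphi'(x) = \Big(\frac{\lambda}{p-1}\Big)^{1/p} + \frac{T_h(x)}{p-1} \cos_p^{p-1}(\varphi(x)) \sin_p(\varphi(x)),
\]
with boundary data $\varphi(0) = -\pi_p/2$, $\varphi(x_0) = 0$, $\varphi(D) = \pi_p/2$.

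The final step is an ODE comparison. By Lemma \ref{lemma:mcp}, $-\cot_{K, N, D}(D - x) \le T_h(x) \le \cot_{K, N, D}(x)$ almost everywhere. Near a point where $\varphi_h$ (with weight $T_h$) and $\varphi_{K, N, D}$ (with weight $T_{K, N, D}$) coincide at a value $\varphi_* < 0$, the common factor $\cos_p^{p-1}(\varphi_*) \sin_p(\varphi_*)$ is non-positive, so $\varphi_{K, N, D}' \ge \varphi_h'$ precisely when $T_{K, N, D} \le T_h$; on $[0, D/2]$ this holds because $T_{K, N, D}(x) = -\cot_{K, N, D}(D - x)$ saturates the admissible lower bound. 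Dually, on $\{\varphi_* > 0\}$ the sign-factor is non-negative, and $T_{K, N, D}(x) = \cot_{K, N, D}(x)$ on $[D/2, D]$ saturates the upper bound. The symmetry $T_{K, N, D}(x) = -T_{K, N, D}(D - x)$ guarantees $\varphi_{K, N, D}(D/2) = 0$, so the sign region for $\varphi_{K, N, D}$ matches the extremal choice pointwise. A standard ODE comparison then yields $\varphi_{K, N, D}(D) \ge \varphi_h(D) = \pi_p/2$ at $\lambda = \lambda^{p, h}$; combined with the monotonicity of the shooting map $\lambda \mapsto \varphi_{K, N, D}(D; \lambda)$, this gives $\lambda^{p, h} \ge \hat \lambda^{p}_{K, N, D}$.

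I expect the main obstacle to lie in the asymmetric case $m < 1$ with $x_0 \ne D/2$: if the sign change of $\varphi_h$ does not coincide with the switching point $D/2$ of $T_{K, N, D}$, a naive pointwise comparison breaks down, because on the mismatched segment the extremal $T_{K, N, D}$ is not saturated relative to the actual sign of $\varphi_h$. This will be resolved by a gluing argument that, using the two extremal building blocks $h^1_{K, N, D}$ and $h^2_{K, N, D}$ from Lemma \ref{lemma:rigidity}, bounds the phase travel separately on the negative and positive arcs of $u$ and reassembles the estimate against the symmetric extremal system \eqref{eq01:pgap}.
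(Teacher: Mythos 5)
Your overall strategy (pass to smooth $h$, apply a Pr\"ufer-type transformation, compare phases against the model weight $T_{K,N,D}$ using the two-sided bound of Lemma \ref{lemma:mcp}) is the same as the paper's.  However, the piece you flag as the ``main obstacle'' and defer to an unspecified ``gluing argument'' is precisely the piece that a complete proof cannot leave open, and you should be aware that it is either unnecessary or it is handled differently in the paper.  When you compare $\varphi_h$ (weight $T_h$, parameter $\lambda$) against the model phase $\varphi^{T_{K,N,D},\,\hat\lambda}$, the sign issue you worry about does not actually arise: at any point where the two phase curves touch, the common value $\varphi_*$ necessarily has the sign of $x-D/2$, because the \emph{model} curve vanishes exactly at $D/2$.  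Hence for $x<D/2$ one has $\varphi_*<0$, so $\cos_p^{p-1}\sin_p(\varphi_*)\le 0$ and $T_{K,N,D}(x)=-\cot_{K,N,D}(D-x)\le T_h(x)$, while for $x>D/2$ one has $\varphi_*>0$ with the reversed pair of inequalities; in either case the $T$-contribution favors the model curve, and since $\lambda<\hat\lambda$ forces a strict gap in the $\alpha$-terms, the model curve strictly escapes at every touching point.  The asymmetry of $\varphi_h$'s zero $x_0$ relative to $D/2$ is therefore irrelevant to the pointwise comparison, and no gluing of $h^1_{K,N,D}$ and $h^2_{K,N,D}$ is needed.  (The paper in fact takes a slightly different route: it uses the reflection symmetry $h\mapsto h(D-\cdot)$, which preserves $\mathcal F_{K,N,D}$ and $\lambda^{p,h}$, to reduce to the case where $\varphi_h$'s zero lies in $[D/2,D]$, and then runs the comparison only on that right-hand interval.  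Either way, there is no asymmetric-case gap to patch.)

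Two smaller points.  First, your change of variables $u=\sin_p(\varphi)$ on one arc and $u=m\sin_p(\varphi)$ on the other is not the Pr\"ufer substitution used in the paper, where one sets $\alpha u=e\sin_p\varphi$, $u'=e\cos_p\varphi$ simultaneously and eliminates the amplitude $e$; your single relation $u=\sin_p\varphi$ overdetermines $\varphi$ once $u$ is given and does not by itself reproduce the first-order ODE \eqref{eq4.1:pgap}, so you should state the full two-equation substitution (or the equivalent definition via $\tan_p\varphi=\alpha u/u'$).  Second, your shooting argument compares at the common parameter $\lambda=\lambda^{p,h}$ and then invokes monotonicity of $\lambda\mapsto\varphi_{K,N,D}(D;\lambda)$; the paper instead assumes $\lambda^{p,h}<\hat\lambda^p_{K,N,D}$ for contradiction and compares the phase at $\lambda$ with the model phase at $\hat\lambda$, extracting a strict inequality from $\alpha_\lambda<\alpha_{\hat\lambda}$ at the boundary $\pm\pi_p/2$ where the $T$-term vanishes.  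Both routes are fine once the comparison step is filled in; the essential gap in your write-up is the unexecuted ``gluing argument,'' which should be replaced either by the observation above or by the reflection reduction.
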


\begin{proof}

{\bf Step 1.} Firstly we will show the existence of $\hat \lambda^{p}_{K, N, D}$.

By Lemma \ref{lemma:mcp} we know $T_{K, N, D} \in C^\infty((0, \frac D2) \cup (\frac D2, D))$ and  $-\cot_{K, N, D}(D-\cdot) \leq T_{K, N, D} \leq  \cot_{K, N, D}$.  Denote  $T=T _{K, N, D}$, and denote by  $u=u^{T, \lambda}$  the (unique)  solution of the following equation:
\begin{equation}\label{eq3:pgap}
\left\{\begin{array}{ll}
\big (u' |u'|^{p-2} \big )'+T  u'|u'|^{p-2}+\lambda  u|u|^{p-2}=0,\\
u(\frac D 2)=0.
\end{array}
\right.
\end{equation}

Next  we will study the equation \eqref{eq3:pgap} using a version of the so-called Pf\"ufer transformation.
Define the functions $e=e^{T, \lambda}$ and $\varphi=\varphi^{T, \lambda}$ by:
\[
\alpha:= \Big (\frac \lambda {p-1} \Big )^{\frac 1p},~~~~\alpha u=e \sin_p(\varphi),~~~~~u'=e \cos_p (\varphi).
\]

By Lemma \ref{lemma2} we know  that $\varphi, e$ solve the following equation:
\begin{equation}\label{eq4:pgap}
\left\{\begin{array}{ll}
\varphi'=\alpha+\frac 1{p-1} T |\cos_p(\varphi) |^{p-2}\cos_p(\varphi) \sin_p(\varphi),\\
\ddt \ln e=\frac{e'}e=-\frac 1{p-1} T |\cos_p(\varphi)|^p.
\end{array}
\right.
\end{equation}

Consider the following initial valued problem on $(0, \frac D2) \cup (\frac D2, D)$.
\begin{equation}\label{eq4.1:pgap}
\left\{\begin{array}{ll}
\varphi'=\alpha+\frac 1{p-1} T |\cos_p(\varphi)|^{p-2}\cos_p(\varphi) \sin_p(\varphi),\\
\varphi(\frac D2)=0.
\end{array}
\right.
\end{equation}
By Cauchy's theorem we have the existence, uniqueness and continuous dependence on the parameters.  Fix an $\epsilon\in (0, \frac D2)$.  We can find  $\alpha=\alpha(\epsilon)>0$, such that  $\varphi'(x)>
\frac {\pi_p}{D-2\epsilon}>0$ for all $x\in (\epsilon, \frac D2)$. 
 So there exists $a_\alpha \in [0, \frac D2)$ such that   $\varphi(a_\alpha)= -\frac {\pi_p}2$.  Similarly, there is  $b_\alpha \in (\frac D2, D]$ such  that  $\varphi(b_\alpha)= \frac {\pi_p}2$.
 Conversely, assume   there is  $ \alpha>0$ such that  the following problem has a solution  $\varphi$ for some $a_\alpha \in [0, \frac D2)$ and $b_\alpha \in (\frac D2, D]$:
 \begin{equation}\label{eq5:pgap}
\left\{\begin{array}{ll}
\varphi'= \alpha+\frac 1{p-1}T |\cos_p(\varphi) |^{p-2}\cos_p(\varphi) \sin_p(\varphi),\\
\varphi(a_\alpha)= -\frac {\pi_p}2, \varphi(\frac D2)=0, \varphi(b_\alpha) = \frac {\pi_p}2.
\end{array}
\right.
\end{equation}
Then for any $\alpha'>\alpha$,  the following problem also  has a solution  for some $a'_\alpha\in (a_\alpha, \frac D2)$ and $b'_\alpha \in (\frac D2, b_\alpha)$
\begin{equation}\label{eq5.1:pgap} 
\left\{\begin{array}{ll}
\varphi'= \alpha'+\frac 1{p-1} T |\cos_p(\varphi) |^{p-2}\cos_p(\varphi) \sin_p(\varphi),\\
\varphi(a'_\alpha)= -\frac {\pi_p}2, \varphi(\frac D2)=0, \varphi(b'_\alpha) = \frac {\pi_p}2.
\end{array}
\right.
\end{equation}

 Therefore, by connectedness,  there is  a minimal $\bar \lambda \geq 0$ such that for any $\lambda> \bar \lambda$, there exist  $\varphi=\varphi^{T, \lambda}$,  $0\leq a^\lambda <\frac D2$ and $\frac D2 <b^\lambda \leq D$   such that 
 \begin{equation}\label{eq5.2:pgap}
\left\{\begin{array}{ll}
\varphi'= \Big (\frac \lambda {p-1} \Big )^{\frac 1p}+\frac 1{p-1} T\cos_p^{p-1}(\varphi)\sin_p(\varphi),\\
\varphi(a^\lambda)=-\frac {\pi_p}2, \varphi(\frac D2)=0, \varphi(b^\lambda)=\frac {\pi_p}2.
\end{array}
\right.
\end{equation}
By continuous dependence on the parameter $\lambda$,  we know  \eqref{eq5.2:pgap} has a solution $\varphi_\infty$ for  $\gamma=\bar \gamma$,  some $a^{\bar \lambda} \in  [0, \frac D2)$ and  $b^{\bar \lambda} \in ( \frac D2, D]$. In particular, $\bar \lambda >0$.

Since  $T(x)=-T(D-x)$ on $[0, \frac D2]$, by symmetry and minimality (or domain monotonicity)  of $\bar \lambda$, we have $a^{\bar \lambda}=0$ and $b^{\bar \lambda}=D$ (otherwise we can find a smaller $\lambda$).
In particular,   there is a minimal  $\hat \lambda^{p}_{K, N, D}$ such that the initial value problem  \eqref{eq01:pgap} has a solution $\varphi^{T _{K, N, D}, \hat \lambda^{p}_{K, N, D}}$.
\bigskip

{\bf Step 2.}
Given  $h \in \mathcal F_{K, N, D} \cap C^\infty$, we will show  that $ \hat \lambda^{p}_{K, N, D} \leq \lambda^{p, h}$.

 First of all,  by a standard variational argument  we can see that $\lambda^{p, h}$ is the smallest positive real number such that there exists a non-zero $u\in W^{1,p}([0, D], h  \mathcal L^1)$ solving  the following equation (in weak sense):
\begin{equation}\label{eq1:lm1}
\Delta_p^h u=-\lambda u|u|^{p-2}
\end{equation}
with Neumann boundary condition, where $ \Delta_p^h u$ is the weighted $p$-Laplacian  on $([0, D], |\cdot|, h \mathcal L^1)$:
$$
\Delta_p^h u=\Delta_p u+u'|u'|^{p-2} (\log h)'=\big (u' |u'|^{p-2} \big )'+u'|u'|^{p-2} \frac {h'}{h}.
$$
By regularity theory we know $u\in C^{1, \alpha} \cap W^{1,p}$ for some $\alpha>0$, and  $u\in C^{2, \alpha} $ if $u' \neq 0$.
Conversely, for any $u$ solving  the Neumann problem \eqref{eq1:lm1}, we have $ \int u|u|^{p-2} h\,\d x=0$ and $ {\int |u'|^ph\,\d x}=\lambda {\int |u|^p h\,\d x}$.

Assume by contradiction that  $\lambda^{p, h}<\hat \lambda^{p}_{K, N, D}$.  From the monotonicity argument in  {\bf Step 1},  we can see that there is  $\lambda < \hat \lambda^{p}_{K, N, D}$  such  that  the following equation  has a (monotone) solution
$\varphi= \varphi^{\frac{h'}h, \lambda} $:
 \begin{equation}\label{eq6:pgap}
\left\{\begin{array}{ll}
\varphi'= \Big (\frac \lambda {p-1} \Big )^{\frac 1p}+\frac 1{p-1} \frac{h'}h \cos_p^{p-1}(\varphi)\sin_p(\varphi),\\
\varphi(0)=-\frac {\pi_p}2, \varphi(D)=\frac {\pi_p}2,
\end{array}
\right.
\end{equation}
Without loss of generality (or by symmetry),  we may assume there is   $a'\in [\frac D2, D]$ such that $\varphi^{\frac{h'}h, \lambda}(a')=0$. Suppose  there is a point $x_0 \in [a', D)$  such that $\varphi^{\frac{h'}h, \lambda}(x_0)
 =\varphi^{T _{K, N, D}, \hat \lambda^{p}_{K, N, D}}(x_0)$. From  Lemma \ref{lemma:mcp} we know that $\frac{h'}h \leq T _{K, N, D}$.   So we know
  \[
\big (\varphi^{\frac{h'}h, \lambda}\big )' (x_0) < \big (\varphi^{T _{K, N, D}, \hat \lambda^{p}_{K, N, D}}\big )' (x_0).
 \]
Therefore, 
  $$\varphi^{\frac{h'}h, \lambda}(x)< \varphi^{T _{K, N, D}, \hat \lambda^{p}_{K, N, D}}(x) 
  $$ for all $x\in (a', D]$, which contradicts to the fact that  $\varphi^{\frac{h'}h, \lambda}(\frac D2)= \varphi^{T _{K, N, D}, \hat \lambda^{p}_{K, N, D}}(\frac D2)= \frac {\pi_p}2$.

 \end{proof}
 
 The following formulas has been used in  \cite{Valtorta12, NV14}. We give a proof for completeness.
 
 \begin{lemma}\label{lemma2}
 Let $e, \varphi, T$ be functions defined in the proof of Theorem \ref{pgap}.  Then we have
 \begin{equation}
\left\{\begin{array}{ll}
\varphi'=\alpha+\frac 1{p-1} T |\cos_p(\varphi) |^{p-2}\cos_p(\varphi) \sin_p(\varphi),\\
\ddt \ln e=\frac{e'}e=-\frac 1{p-1} T |\cos_p(\varphi)|^p.
\end{array}
\right.
\end{equation}
 \end{lemma}
 \begin{proof}
Firstly, we have
\begin{eqnarray*}
\big (u' |u'|^{p-2} \big )' &=& \big (e \cos_p (\varphi) |e \cos_p (\varphi)|^{p-2} \big )'\\
&=&  |e \cos_p (\varphi)|^{p-2} \big( e' \cos_p (\varphi)+e \sin''_p (\varphi)\varphi'\big)\\
&&+ e \cos_p (\varphi) (p-2)e\cos_p (\varphi) |e \cos_p (\varphi)|^{p-4} \big( e' \cos_p (\varphi)+e \sin''_p (\varphi)\varphi'\big)\\
&=& |e \cos_p (\varphi)|^{p-2}(p-1) \big( e' \cos_p(\varphi)+e \sin''_p(\varphi)\varphi' \big).
\end{eqnarray*}
Combining with \eqref{eq3:pgap} we obtain
\begin{eqnarray*}
&&|e \cos_p (\varphi)|^{p-2}\big( e' \cos_p(\varphi)+e \sin''_p(\varphi)\varphi' \big)\sin_p(\varphi)\\
&+& \frac 1{p-1} T e \cos_p(\varphi) \sin_p(\varphi) | e \cos_p(\varphi)|^{p-2}+\frac \lambda{p-1} \alpha^{1-p} e^{ p-1}|  \sin_p(\varphi)|^{p}=0. 
\end{eqnarray*}

  Differentiating the  equation $\alpha u=e \sin_p(\varphi)$ and  substituting $u'$ by $e \cos_p (\varphi)$,  we get
\[
\alpha e \cos_p (\varphi)=e' \sin_p(\varphi)+e \cos_p(\varphi)\varphi'.
\]
Differentiating the identity $|\sin_p(t)|^p+|\cos_p(t)|^p=1$ we also have
\[
|\sin_p(t)|^{p-2}\sin_p(t) \cos_p(t)+|\cos_p(t)|^{p-2}\cos_p(t) \sin''_p(t)=0.
\]
Therefore,
 \begin{eqnarray*}
&&|e \cos_p (\varphi)|^{p-2}\big( e' \cos_p(\varphi)+e \sin''_p(\varphi)\varphi' \big)\sin_p(\varphi)\\&=&
|e \cos_p (\varphi)|^{p-2}\big( \alpha e \cos^2_p (\varphi)-e  \cos^2_p (\varphi) \varphi'+e \sin''_p(\varphi)\sin_p(\varphi)\varphi' \big)\\
&=& \alpha e^{p-1} (\varphi)  | \cos_p (\varphi)|^{p}-e^{p-1}\big( | \cos_p (\varphi)|^{p}+| \sin_p (\varphi)|^{p}   \big)\varphi'\\
&=&   \alpha e^{p-1} (\varphi)  | \cos_p (\varphi)|^{p}-e^{p-1}\varphi'.
\end{eqnarray*}
Combining the results above, we prove the lemma.
 
 \end{proof}

 Combining Proposition \ref{lemma1} and Theorem \ref{pgap}, we get the following corollary immediately.  
 
 \begin{corollary}\label{coro}
 We have the following sharp $p$-spectral gap estimates for one dimensional models:
 \[
\lambda^{p}_{K, N, D}= \left\{\begin{array}{lll}  
\hat \lambda^{p}_{K, N, D} & \text{if}~~ K \leq 0 \\
\inf_{D' \in (0,\min(D,D_{K,N})]} \hat \lambda^{p}_{K, N, D'} & \text{if}~~ K > 0 \\
\end{array}
\right . 
\]
 \end{corollary}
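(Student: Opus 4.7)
The plan is to compute, for each admissible diameter $D'$, the model infimum
\[
I(D') \ := \ \inf_{h\in \mathcal F_{K,N,D'}} \lambda^{p,h},
\]
and then read off the corollary by combining this with Proposition \ref{lemma1}, which rewrites $\lambda^p_{K,N,D}$ as an infimum over (smooth) $\mcp$-densities.

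First I would show $I(D')=\hat\lambda^p_{K,N,D'}$. The $\geq$ direction is immediate from Theorem \ref{pgap}: every $h\in \mathcal F_{K,N,D'}$ satisfies $\lambda^{p,h}\geq \hat\lambda^p_{K,N,D'}$. For the matching $\leq$, I would exhibit a density attaining the bound, namely $h_{K,N,D'}$. By construction its logarithmic derivative is exactly $T_{K,N,D'}$, so the ODE \eqref{eq01:pgap} with $\lambda=\hat\lambda^p_{K,N,D'}$ admits, by Step 1 of the proof of Theorem \ref{pgap}, a solution $\varphi$ with $\varphi(0)=-\pi_p/2$, $\varphi(D'/2)=0$, $\varphi(D')=\pi_p/2$. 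Reversing the Pr\"ufer transformation yields $u$ satisfying \eqref{eq3:pgap} on $([0,D'],h_{K,N,D'}\mathcal L^1)$ with $u'=e\cos_p(\varphi)$; the boundary values of $\varphi$ force $u'(0)=u'(D')=0$, so $u$ satisfies the Neumann boundary condition. Since $T_{K,N,D'}(x)=-T_{K,N,D'}(D'-x)$, the density $h_{K,N,D'}$ is symmetric about $D'/2$ and the eigenfunction $u$ is antisymmetric about $D'/2$ (indeed $\varphi(D'/2)=0$ forces $u(D'/2)=0$ and the antisymmetry propagates by uniqueness); hence $\int u|u|^{p-2}h_{K,N,D'}\,dx=0$, making $u$ an admissible test function in \eqref{eq:pgap}. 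This gives $\lambda^{p,h_{K,N,D'}}\leq \hat\lambda^p_{K,N,D'}$, so $I(D')=\hat\lambda^p_{K,N,D'}$.

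With $I(D')$ computed, the two cases of the corollary follow from Proposition \ref{lemma1}. If $K\leq 0$, equation \eqref{eq:lm1} of Proposition \ref{lemma1} gives $\lambda^p_{K,N,D}=\inf_{h\in \mathcal F_{K,N,D}\cap C^\infty}\lambda^{p,h}=I(D)=\hat\lambda^p_{K,N,D}$, where the density $h_{K,N,D}$ (which is only Lipschitz at $D/2$) can be approximated by smooth $\mcp$-densities by the mollification argument already used in the proof of \eqref{eq0:lm1}. If $K>0$, equation \eqref{eq0:lm1} gives
\[
\lambda^p_{K,N,D} \ = \ \inf_{D'\leq D}\,I(D') \ = \ \inf_{D'\leq D}\hat\lambda^p_{K,N,D'},
\]
and since $\mathcal F_{K,N,D'}$ has support forced into $[0,D'\wedge D_{K,N}]$ (so $\mathcal F_{K,N,D'}=\mathcal F_{K,N,D_{K,N}}$ for all $D'\geq D_{K,N}$), the range of $D'$ effectively collapses to $(0,\min(D,D_{K,N})]$, yielding the stated formula.

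The only genuine step beyond bookkeeping is the attainment assertion $\lambda^{p,h_{K,N,D'}}=\hat\lambda^p_{K,N,D'}$: one must verify that the Pr\"ufer-produced function is a legitimate competitor in the variational problem \eqref{eq:pgap}. This reduces to the symmetry/antisymmetry observation above and is the place to be careful, since $h_{K,N,D'}$ is not $C^1$ at $D'/2$; however $u,u'$ extend continuously across $D'/2$ and $u\in W^{1,p}$, which is all that is required for the weak formulation of \eqref{eq1:lm1}.
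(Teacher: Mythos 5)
Your proof is correct and takes essentially the same route the paper intends: the paper states the corollary follows ``immediately'' from combining Proposition~\ref{lemma1} and Theorem~\ref{pgap}, and you have simply made explicit the attainment step that the paper leaves implicit, namely that $h_{K,N,D'}$ realizes $\hat\lambda^p_{K,N,D'}$ via reversal of the Pr\"ufer transformation (using the antisymmetry of $T_{K,N,D'}$ about $D'/2$ to get the vanishing of the $p$-mean and the boundary values of $\varphi$ to get the Neumann condition), together with the mollification already built into Proposition~\ref{lemma1} to pass from the merely Lipschitz density $h_{K,N,D'}$ to smooth competitors. The details you supply—antisymmetry of the eigenfunction, the collapse of the range of $D'$ to $(0,\min(D,D_{K,N})]$ when $K>0$, and the remark that only $u\in W^{1,p}$ with $u,u'$ continuous across $D'/2$ is needed for the weak formulation—are exactly the right points to address.
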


\bigskip

\begin{theorem}[One dimensional rigidity]\label{th-rigidity}
Given $K\leq 0$, $N>1$ and $D>0$. If $\lambda^{p, h}=\hat \lambda^{p}_{K, N, D}$ for some $h \in \mathcal F_{K, N, D}$. Then $h=h_{K, N, D}$  up to a multiplicative constant.
\end{theorem}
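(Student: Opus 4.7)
The plan is to apply the Pr\"ufer-type analysis used in the proof of Theorem \ref{pgap} Step 2 to an eigenfunction realizing $\lambda^{p,h}=\hat\lambda^{p}_{K,N,D}$ and to upgrade the non-strict comparison to an equality. First I would take the eigenfunction $u$ associated to $\lambda^{p,h}$, apply the Pr\"ufer transformation, and obtain $\varphi=\varphi^{h'/h,\hat\lambda^{p}_{K,N,D}}$ solving \eqref{eq6:pgap} with $\varphi(0)=-\pi_p/2$, $\varphi(D)=\pi_p/2$, monotone, with unique zero $x^{\ast}\in(0,D)$ corresponding to the sign change of $u$. By the reflection $x\mapsto D-x$ (which preserves $\mathcal F_{K,N,D}$, keeps $\lambda^{p,h}$ unchanged, and swaps the two bounds of Lemma \ref{lemma:mcp}), I may assume $x^{\ast}\geq D/2$. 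Writing $\alpha:=(\hat\lambda^{p}_{K,N,D}/(p-1))^{1/p}$ and $G(\varphi):=\cos_p^{p-1}(\varphi)\sin_p(\varphi)$, the model Pr\"ufer function $\varphi^{\ast}:=\varphi^{T_{K,N,D},\hat\lambda^{p}_{K,N,D}}$ from \eqref{eq01:pgap} has zero exactly at $D/2$ and coincides with $\varphi$ at the endpoints $0$ and $D$.

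The main step is to show $x^{\ast}=D/2$. I would parameterize by $\varphi$ on the right half, writing $x=x_h(\varphi)$ on $[x^{\ast},D]$ and $x=x_{\ast}(\varphi)$ on $[D/2,D]$, so that $\frac{dx}{d\varphi}=\bigl[\alpha+\frac{1}{p-1}T(x)G(\varphi)\bigr]^{-1}$ with $T=h'/h$ and $T=\cot_{K,N,D}$ respectively. Both satisfy $x_h(\pi_p/2)=x_{\ast}(\pi_p/2)=D$. The key monotonicity is: at any $\varphi\in(0,\pi_p/2)$ with $x_h(\varphi)\geq x_{\ast}(\varphi)$, Lemma \ref{lemma:mcp} and the strict decrease of $\cot_{K,N,D}$ give
\[
\tfrac{h'}{h}(x_h(\varphi))\leq \cot_{K,N,D}(x_h(\varphi))\leq \cot_{K,N,D}(x_{\ast}(\varphi)),
\]
hence $\frac{dx_h}{d\varphi}\geq \frac{dx_{\ast}}{d\varphi}$, with strict inequality as soon as $h'/h<\cot_{K,N,D}$ at $x_h(\varphi)$ and $G(\varphi)>0$. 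Assuming for contradiction $x^{\ast}>D/2$, so that $x_h(0)>x_{\ast}(0)$, an ODE comparison argument in the spirit of the crossing argument from Step 2 of Theorem \ref{pgap} then shows that $x_h>x_{\ast}$ propagates up to $\varphi=\pi_p/2$, contradicting $x_h(\pi_p/2)=x_{\ast}(\pi_p/2)$. Thus $x^{\ast}=D/2$.

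Once $x^{\ast}=D/2$, one has $x_h(0)=x_{\ast}(0)=D/2$ and $x_h(\pi_p/2)=x_{\ast}(\pi_p/2)=D$, so $\int_0^{\pi_p/2}\bigl(\frac{dx_h}{d\varphi}-\frac{dx_{\ast}}{d\varphi}\bigr)d\varphi=0$. A Taylor expansion at $\varphi=0$ using $G(0)=0$ and $G'(0)=1$ yields
\[
\tfrac{d^2}{d\varphi^2}(x_h-x_{\ast})\big|_{0^+}=\tfrac{\cot_{K,N,D}(D/2)-(h'/h)(D/2)}{(p-1)\alpha^{2}}\geq 0,
\]
so $x_h\geq x_{\ast}$ near $0^+$, and the same coincidence-point analysis (the $\varphi$-derivative of $x_h-x_{\ast}$ is $\geq 0$ whenever $x_h=x_{\ast}$) prevents $x_h$ from dipping below $x_{\ast}$ on $[0,\pi_p/2]$. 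The integrand $\frac{dx_h}{d\varphi}-\frac{dx_{\ast}}{d\varphi}$ is then non-negative, integrates to zero, hence vanishes almost everywhere; combined with $h'/h(x_h)\leq \cot_{K,N,D}(x_h)\leq \cot_{K,N,D}(x_{\ast})$ this forces $h'/h=\cot_{K,N,D}$ almost everywhere on $[D/2,D]$. A symmetric argument on $[0,D/2]$ using the lower bound of Lemma \ref{lemma:mcp} gives $h'/h=-\cot_{K,N,D}(D-\cdot)$ there. Hence $(\ln h)'=T_{K,N,D}$ almost everywhere on $[0,D]$, and Lemma \ref{lemma:rigidity} yields $h=c\cdot h_{K,N,D}$.

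The main obstacle I anticipate is the rigorous propagation of the ODE comparison in the equality case. Unlike in Step 2 of Theorem \ref{pgap}, the equality $\lambda^{p,h}=\hat\lambda^{p}_{K,N,D}$ eliminates the strict inequality coming from $\alpha_h<\alpha^{\ast}$, so the only possible source of strictness is a pointwise gap $h'/h<\cot_{K,N,D}$ on the right half. Ruling out tangential touchings of the curves $x_h$ and $x_{\ast}$ at interior points, where $h'/h$ and $\cot_{K,N,D}$ can momentarily agree, requires a careful second-order analysis of the Pr\"ufer ODE together with the degeneracy of $G$ at $\varphi=\pm\pi_p/2$; nevertheless the strict monotonicity of $\cot_{K,N,D}$ and the integral equality $\int(dx_h-dx_{\ast})/d\varphi\,d\varphi=0$ force any deviation to vanish.
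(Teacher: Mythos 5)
Your proposal takes a genuinely different route from the paper, and while the overall strategy is sound, there are specific gaps that prevent it from closing as written.

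\textbf{Comparison with the paper's approach.} The paper never works directly with a Pr\"ufer phase attached to the non-smooth density $h$. Instead it approximates $h$ by smooth $h_n\in\mathcal F_{K,N,D}\cap C^\infty$ (so that an eigenfunction and Pr\"ufer phase $\varphi_n$ certainly exist), extracts a uniform limit $\varphi_\infty$ by Arzel\`a--Ascoli, identifies $\varphi_\infty$ with the model phase $\varphi^{T_{K,N,D},\hat\lambda^p_{K,N,D}}$ by a gluing contradiction (constructing a modified density $\bar h_n$ supported on a shorter interval $[0,D-\delta_n]$ and invoking the strict monotonicity of Proposition \ref{lemma1}), and then passes to the limit in an integral identity $\frac{\pi_p}{2}=\int_0^{D/2}\varphi_n'\,\d x$ to force $(\ln h_n)'\to T_{K,N,D}$ in $L^1$. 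Your proposal works directly with $h$ and replaces the gluing step by a direct ODE comparison of the inverse phases $x_h$ and $x_*$.

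\textbf{Gaps in the proposal.} First, you implicitly assume a $C^1$ eigenfunction for $\lambda^{p,h}$ exists for a general $h\in\mathcal F_{K,N,D}$ so that the Pr\"ufer transformation can be applied; for a merely Lipschitz weight that may vanish at the endpoints this needs justification, and the paper sidesteps it precisely by working with smooth approximants. Second, and more seriously, the Taylor expansion at $\varphi=0$ is not available: MCP densities are only locally Lipschitz, and $(h'/h)(D/2)$ need not be defined as a pointwise value (Lemma \ref{lemma:mcp} only gives $\mathcal L^1$-a.e.\ differentiability). Even granting smoothness of $h$, the logic is off: from $g:=x_h-x_*$ with $g(0)=g(\pi_p/2)=0$ and $g'\geq 0$ whenever $g\geq 0$, the sign of $g''(0)$ cannot be $>0$ (that would force $g(\pi_p/2)>0$), so all the Taylor step can establish is $g''(0)=0$, i.e.\ $(h'/h)(D/2)=\cot_{K,N,D}(D/2)$; it does \emph{not} establish $g\geq 0$ near $0^+$. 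What is actually needed to exclude $g<0$ is a two-sided bound of the form $g'\geq -L\,|g|$ a.e.\ (coming from Lipschitz dependence of the right-hand side on $x_*$, since $\cot_{K,N,D}$ is Lipschitz on $[D/2,D]$ and $h'/h\leq\cot_{K,N,D}$), followed by Gr\"onwall; this is exactly the ``tangential touching'' obstacle you flag at the end but do not resolve. Once $g\geq 0$ is established by Gr\"onwall, the rest of your argument (monotonicity then forces $g\equiv 0$ since $g(\pi_p/2)=0$, and then $h'/h=\cot_{K,N,D}$ a.e.\ on $[D/2,D]$, symmetric on the left half, then Lemma \ref{lemma:rigidity}) does go through. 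So the skeleton is workable, but the proof as written relies on a Taylor expansion that is both unavailable (regularity) and logically insufficient (it forces $g''(0)=0$, not a sign for $g$), and the acknowledged obstacle is the genuine mathematical content that remains to be supplied.
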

\begin{proof}

Assume  $\lambda^{p, h}=\hat \lambda^{p}_{K, N, D}$ for some $h \in \mathcal F_{K, N, D}$. Then there is $h_n \in \mathcal F_{K, N, D}\cap C^\infty$ with  $h_n \to h$ uniformly, 
and a decreasing sequence  $(\lambda^{p, h_n})$ with $\lambda^{p, h_n} \to \hat \lambda^{p}_{K, N, D}$,  such that  $\varphi_n=\varphi^{\frac{h'_n}{h_n}, \lambda^{p, h_n}}$ solves the following equation:
 \begin{equation}\label{eq7:pgap}
\left\{\begin{array}{ll}
\varphi'_n= \Big (\frac {\lambda^{p, h_n}} {p-1} \Big )^{\frac 1p}+\frac 1{p-1} \frac{h'_n}{h_n} \cos_p^{p-1}(\varphi_n)\sin_p(\varphi_n),\\
\varphi_n(0)=-\frac {\pi_p}2,  \varphi_n(D)=\frac {\pi_p}2.
\end{array}
\right.
\end{equation}
From Lemma \ref{eq-MCP}  we know  that $\{\varphi'_n\}_n$  and $\{\varphi_n\}_n$ are uniformly bounded.  By Arzel\`a-Ascoli theorem we may assume
$\varphi_n \to \varphi_\infty$  uniformly for some Lipschitz function $\varphi_\infty$.

 By   minimality of $\hat \lambda^{p}_{K, N, D}$ and symmetry, we can see that   $\lmt{n}\infty \varphi_n^{-1}(t)$ exists for any $t\in  [-\frac{\pi_p}2, \frac{\pi_p}2]$ and 
 $$\lmt{n}\infty \varphi_n^{-1}= \big(\varphi^{T _{K, N, D}, \hat \lambda^{p}_{K, N, D}}\big )^{-1}.
 $$
 In fact, assume by contradiction that $\lmt{n}\infty \varphi_n^{-1} (t) \neq  \big(\varphi^{T _{K, N, D}, \hat \lambda^{p}_{K, N, D}}\big )^{-1}(t)$ for some $t\in (-\frac{\pi_p}2, \frac{\pi_p}2)$. By symmetry we may assume  there are  $N_1\in \N$ and $\delta>0$, such that 
 $\delta_n:= \big(\varphi^{T _{K, N, D}, \hat \lambda^{p}_{K, N, D}}\big )^{-1}(t)-\varphi_n^{-1} (t)\geq \delta$ for all  $n\geq N_1$.
Define a  $\mcp$ density $\bar h_n$ by
\begin{equation*}
\bar h_{n}(x):=\left\{\begin{array}{ll||||}
h_n(x) ~~~~~~~~~~~~~~~~~~~~~~~~~~~~~~~~~~~~~~~~\text{if}~x\in [0, \varphi_n^{-1} (t)],\\
\frac{h_n( \varphi_n^{-1} (t))}{h_{K, N, D}( \varphi_n^{-1} (t)+\delta_n)}h_{K, N, D}(x+\delta_n) ~~~~~~~~\text{if}~x \in [\varphi_n^{-1} (t), D-\delta_n].
\end{array}
\right.
\end{equation*}
 Then $\bar \varphi_n=\varphi^{\frac{\bar h'_n}{\bar h_n}, \lambda^{p, h_n}}$ satisfies $(\bar \varphi_n)^{-1}(\frac{\pi_p}2)<D-\frac\delta 2$ for $n$ large enough, 
which contradicts to  Proposition \ref{lemma1} and the minimality of $\hat \lambda^{p}_{K, N, D}$.
 
 In conclusion, $\varphi_\infty=\varphi^{T _{K, N, D}, \hat \lambda^{p}_{K, N, D}}$ and we have  $\varphi_n \to \varphi^{T _{K, N, D}, \hat \lambda^{p}_{K, N, D}}$ uniformly.

Then we get
\begin{eqnarray*}
\frac {\pi_p} 2&= & \varphi_n( \varphi_n^{-1}(0))-\varphi_n(0)\\
&=&\lmt{n}{\infty} \int_0^{\varphi_n^{-1}(0) \land \frac D2}  \Big (\frac {\lambda^{p, h_n}} {p-1} \Big )^{\frac 1p}+\frac 1{p-1} \frac{h'_n}{h_n} \cos_p^{p-1}(\varphi_n)\sin_p(\varphi_n) \,\d x\\
&\leq & \lmt{n}{\infty} \int_0^{\varphi_n^{-1}(0) \land \frac D2} \Big (\frac {\lambda^{p, h_n}} {p-1} \Big )^{\frac 1p}+ \frac 1{p-1} T_{K, N, D}\cos_p^{p-1}(\varphi_n)\sin_p(\varphi_n) \,\d x\\
&=&  \int_0^{ \frac D2} \Big (\frac {\hat \lambda^{p}_{K, N, D}} {p-1} \Big )^{\frac 1p}+ \frac 1{p-1} T_{K, N, D}\cos_p^{p-1}(  \varphi^{T _{K, N, D}, \hat \lambda^{p}_{K, N, D}})\sin_p(\varphi^{T _{K, N, D}, \hat \lambda^{p}_{K, N, D}}) \,\d x\\
&=& \frac {\pi_p} 2.
\end{eqnarray*}
Therefore,
$$
(\ln h_n)'=\frac{h'_n}{h_n}  \to T_{K,N,D}=\frac {h'_{K,N,D}}{h_{K,N,D}}
$$ in $L^1([0, \frac D2],  \cos_p^{p-1}(  \varphi^{T_{K, N, D}, \hat \lambda^{p}_{K, N, D}})\sin_p(\varphi^{T _{K, N, D}, \hat \lambda^{p}_{K, N, D}}) \mathcal L^1)$. By symmetry, we can see that $(\ln h_n)' \to (\ln h_{K,N,D})'$ in  $L^1([0, D], \mathcal L^1)$. Hence   $h=h_{K, N, D}$  up to a multiplicative constant.

\end{proof}

\section{$p$-spectral gap}

\subsection{Sharp $p$-spectral gap estimates}
Using  standard  localization argument (c.f. Theorem 1.1 \cite{HM-MCP},  Theorem 4.4  \cite{CM-S}), we can  prove  the  sharp $p$-Poincar\'e inequality  with  one dimensional results.   

\begin{theorem}[The sharp $p$-spectral gap under $\mcp$]\label{th1}
Let $\ms$ be an essentially non-branching metric measure space satisfying $\mcp$  for some $K \in \R, N\in (1, \infty)$ and ${\rm diam}(X) \leq D$.    For any $p>1$,  define $\lambda^p_{\ms}$ as the optimal constant in $p$-Poincar\'e inequality on $\ms$:
\[
\lambda^{p}_{\ms} := \inf \left \{\frac {\int |\nabla  f|^p \,\d \mm}{\int |f|^p  \,\d \mm}: f \in \Lip \cap L^p, \int f|f|^{p-2}  \,\d \mm=0, f\neq 0 \right \}.
\]

Then we have the following sharp estimate
\[
\lambda^{p}_{\ms} \geq \lambda^{p}_{K, N, D}  = \left\{\begin{array}{lll}  
\hat \lambda^{p}_{K, N, D} & \text{if}~~ K \leq 0 \\
\inf_{D' \in (0,\min(D,D_{K,N})]} \hat \lambda^{p}_{K, N, D'} & \text{if}~~ K > 0.\\
\end{array}
\right. 
\]
\end{theorem}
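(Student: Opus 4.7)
The plan is to derive $\lambda^{p}_{(X,\d,\mm)} \geq \lambda^{p}_{K,N,D}$ from the one-dimensional bound (Theorem \ref{pgap} and Corollary \ref{coro}) via the needle decomposition of Cavalletti--Mondino. By a standard density argument it is enough to test the Rayleigh quotient on an admissible Lipschitz $f \not\equiv 0$; set $g := f|f|^{p-2}$ so that the zero-average condition reads $\int_X g\,\d\mm = 0$.

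The main step is to choose the $1$-Lipschitz function fed into Theorem \ref{prop-l1} as a Kantorovich potential $u$ for the $L^1$-transport problem between $g_+\mm$ and $g_-\mm$. The needle decomposition then yields a disintegration $\mm\restr{\mathsf T_u} = \int_{\mathfrak{Q}} \mm_q\,\d \mathfrak{q}(q)$ along transport rays $\{X_q\}_{q\in \mathfrak Q}$ with the following additional features stemming from this Kantorovich-potential choice (as recorded in \cite{CM-Laplacian}): (i) the ray-wise balance $\int_{X_q} g\,\d \mm_q = 0$ holds for $\mathfrak q$-a.e.\ $q$; (ii) $g = 0$ holds $\mm$-a.e.\ on $X \setminus \mathsf T_u$, which, since $p > 1$, forces $f = 0$ $\mm$-a.e.\ there. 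By Theorem \ref{prop-l1}, each density $h_q$ of $\mm_q$ with respect to arc length belongs to $\mathcal F_{K, N, D_q}$ for some $D_q \leq D$ (and $D_q \leq D_{K,N}$ when $K > 0$).

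On a $\mathfrak q$-a.e.\ ray, the arc-length parameterization of $f|_{X_q}$ is Lipschitz and, by (i), admissible in \eqref{eq:pgap}, so Theorem \ref{pgap} gives
\begin{equation*}
\int_{X_q} |(f|_{X_q})'|^p\,\d \mm_q \;\geq\; \hat\lambda^p_{K, N, D_q}\int_{X_q} |f|^p\,\d \mm_q \;\geq\; \lambda^p_{K, N, D}\int_{X_q} |f|^p\,\d \mm_q,
\end{equation*}
where the second inequality follows from Corollary \ref{coro}: for $K\leq 0$ it is the monotonicity in Proposition \ref{lemma1} applied to $D_q\le D$, and for $K>0$ it is the defining infimum taken over $D'\in(0,\min(D,D_{K,N})]$. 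Since $X_q$ is a unit-speed geodesic, the pointwise comparison $|(f|_{X_q})'| \leq |\nabla f|$ holds $\mathcal H^1$-a.e.\ on $X_q$. Integrating in $q$ and using (ii),
\begin{equation*}
\int_X |\nabla f|^p\,\d \mm \;\geq\; \int_{\mathfrak Q}\!\int_{X_q} |\nabla f|^p\,\d \mm_q\,\d\mathfrak q \;\geq\; \lambda^p_{K, N, D}\!\int_{\mathfrak Q}\!\int_{X_q} |f|^p\,\d \mm_q\,\d\mathfrak q \;=\; \lambda^p_{K, N, D}\!\int_X |f|^p\,\d \mm.
\end{equation*}

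The main delicate point is the ray-wise balance (i): it is not part of the statement of Theorem \ref{prop-l1} as recalled here, but is precisely the reason for feeding the Kantorovich potential of $g\mm$ into the localization, and its proof relies on cyclical monotonicity of the optimal transport plan underlying the needle decomposition. A secondary technical point is the pointwise gradient comparison $|(f|_{X_q})'|\le |\nabla f|$, which is immediate from the unit-speed parameterization together with the essentially non-branching hypothesis that ensures $\mm_q$-a.e.\ point of $X_q$ is an interior geodesic point.
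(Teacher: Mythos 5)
Your proof is correct and follows essentially the same route as the paper: apply the $L^1$-localization theorem to the balanced function $g=f|f|^{p-2}$, invoke the one-dimensional estimate (Theorem \ref{pgap}/Corollary \ref{coro}) ray-by-ray, compare $|(f|_{X_q})'|$ with $|\nabla f|$, and integrate in $q$. You are in fact slightly more careful than the paper's proof on two points that the paper leaves implicit: the fact that the disintegration in Theorem \ref{prop-l1} is stated only on the transport set $\mathsf T_u$ (and not all of $X$), so one must also note that $g$ --- and hence $f$, since $p>1$ --- vanishes $\mm$-a.e.\ on $X\setminus\mathsf T_u$; and the ray-wise zero-average condition $\int_{X_q} g\,\d\mm_q=0$, which is a property of the Cavalletti--Mondino localization associated with the Kantorovich potential for $g_+\mm\to g_-\mm$ and is not part of Theorem \ref{prop-l1} as quoted. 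Two minor remarks: the opening ``density argument'' is superfluous here because $\lambda^p_{\ms}$ is already defined over Lipschitz functions; and the pointwise bound $|(f|_{X_q})'|\le|\nabla f|$ follows simply from unit-speed parameterization and the definition of the local Lipschitz constant --- the essentially non-branching hypothesis is not what drives it.
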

\begin{proof}
Let $\bar f=f|f|^{p-2} $ be a Lipschitz function with $\int \bar f=0$.   Let  $\bar f^\pm$ denote the positive and the negative parts of $\bar f$ respectively. Then we have  $\int \bar f^+=-\int \bar f^-$. Consider  the $L^1$-optimal transport problem from $\mu_0:=\bar f^+\mm$ to $\mu_1:= -\bar f^-\mm$. By Theorem \ref{prop-l1},  there exists  a family of disjoint unparameterized geodesics $\{X_q\}_{q \in \mathfrak Q}$ of length at most $D$, such that 
\[
\mm(X \setminus \cup X_q)=0,~~~\mm=\int_{\mathfrak{Q}} \mm_q\, \d \mathfrak{q}(q)
\]
where $\mm_q=h_q \mathcal H^1 \restr{X_q}$  for some $h_q \in \mathcal F_{K, N, D'_q}$ with $D'_q\leq D$, $\mm_q(X_q)=\mm(X)$ and
\[
\int \bar f h_q \, \d \mathcal H^1 \restr{X_q}=0
\]
for $\mathfrak{q}$-a.e. $q \in \mathfrak{Q}$.

Denote $f_q=f \restr{X_q}$. By definition we obtain
\[
\int |f_q'| ^p h_q \, \d \mathcal H^1 \restr{X_q} \geq   \lambda^{p, h_q}\int |f_q|^p h_q \, \d \mathcal H^1 \restr{X_q} \geq  \lambda^{p}_{K, N, D}\int |f_q|^p h_q \, \d \mathcal H^1 \restr{X_q}.
\]
Notice that $|f_q'| \leq |\nabla f|$. Thus, we have
\begin{eqnarray*}
\lambda^{p}_{K, N, D}  \int  | f|^p \,\d \mm &=&   \lambda^{p}_{K, N, D}   \int_{\mathfrak{Q}} \int_{X_q} | f_q|^p \mm_q\, \d \mathfrak{q}(q)  \\
&\leq&  \int_{\mathfrak{Q}} \int_{X_q} | f'_q|^p \mm_q\, \d \mathfrak{q}(q) \\
&=&  \int  |\nabla  f|^p \,\d \mm.
\end{eqnarray*}
Combining with Corollary \ref{coro} we prove the theorem.
\end{proof}

\subsection{Rigidity for  $p$-spectral gap}

In this part,  we will study the  rigidity for $p$-spectral gap under the measure contraction property.  We adopt the notation $|\D f|$ to denote the weak upper gradient of a Sobolev function $f$. We refer the readers to \cite{AGS-C} and \cite{G-O} for details about  Sobolev space theory and calculus on metric measure spaces.

\begin{theorem}[Rigidity for $p$-spectral gap]\label{th2}
Let $\ms$ be an essentially non-branching metric measure space satisfying $\mcp$  for some $K \leq 0, N\in (1, \infty)$ and ${\rm diam}(X) \leq D$. Assume there is 
a non-zero Sobolev function $f\in W^{1,p}\ms$ with  $\int f|f|^{p-2}  \,\d \mm=0$ such that 
\[
{\int |\D  f|^p \,\d \mm}-\hat \lambda^{p}_{K, N, D}{\int |f|^p  \,\d \mm}=0.
\] 
Then ${\rm diam}(X)=D$  and there are disjoint unparameterized geodesics $\{X_q\}_{q \in \mathfrak Q}$ of length  $D$ such that $\mm(X \setminus \cup X_q)=0$.  Moreover, $\mm$ has the following representation
\[
\mm=\int_\mathfrak{Q} h_q \, \d \mathcal H^1 \restr{X_q} \d \mathfrak{q}(q),
\]
where  $\frac{h'_q}{h_q}=T^p_{K, N, D}$ for $\mathfrak{q}$-a.e. $q \in \mathfrak{Q}$.
\end{theorem}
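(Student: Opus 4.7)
The plan is to run the $L^1$-localization of Theorem \ref{th1} on the extremal function $f$ itself and identify which steps in the resulting chain of inequalities must be equalities. Setting $\bar f := f|f|^{p-2}$ and applying Theorem \ref{prop-l1} to the $L^1$-optimal transport between $\bar f^+\mm$ and $-\bar f^-\mm$ yields a disjoint family of transport rays $\{X_q\}_{q \in \mathfrak Q}$ of length $D'_q \le D$, together with a disintegration
\[
\mm = \int_{\mathfrak Q} h_q\,\d \mathcal H^1\restr{X_q}\,\d \mathfrak q(q),
\]
with $h_q \in \mathcal F_{K,N,D'_q}$ and $\int \bar f h_q\,\d \mathcal H^1\restr{X_q}=0$ for $\mathfrak q$-a.e.\ $q$, so the restriction $f_q := f\restr{X_q}$ is admissible in the Rayleigh quotient defining $\lambda^{p,h_q}$ whenever it is non-trivial.

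I would then invoke the needle-decomposition calculus for essentially non-branching $\mcp$ spaces from \cite{CM-Laplacian}: for $f \in W^{1,p}\ms$, the restriction $f_q$ belongs to $W^{1,p}([0, D'_q], h_q \mathcal L^1)$ with $|f'_q| \le |\D f|\restr{X_q}$ for $\mathfrak q$-a.e.\ $q$, and the $p$-energy disintegrates as $\int |\D f|^p\,\d \mm \ge \int_{\mathfrak Q}\int |f'_q|^p h_q\,\d \mathcal H^1\,\d \mathfrak q$. Combining this with Theorem \ref{pgap} applied ray by ray and the strict monotonicity of $D' \mapsto \hat\lambda^p_{K,N,D'}$ from Proposition \ref{lemma1} (valid since $K \le 0$) yields
\[
\int |\D f|^p\,\d \mm \ge \int_{\mathfrak Q}\!\int |f'_q|^p h_q\,\d \mathcal H^1\,\d \mathfrak q \ge \int_{\mathfrak Q}\! \hat\lambda^p_{K,N,D'_q}\!\!\int |f_q|^p h_q\,\d \mathcal H^1\,\d \mathfrak q \ge \hat\lambda^p_{K,N,D}\!\int |f|^p\,\d \mm,
\]
which the extremality hypothesis forces to be a chain of equalities.

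From the outer equality and the strict monotonicity, $D'_q = D$ for $\mathfrak q$-a.e.\ $q$ carrying a non-trivial $f_q$, giving ${\rm diam}(X)=D$. The middle equality forces $\lambda^{p,h_q} = \hat\lambda^p_{K,N,D}$ on $\mathfrak q$-a.e.\ such ray, and the one-dimensional rigidity Theorem \ref{th-rigidity} identifies $h_q = h_{K,N,D}$ up to a multiplicative constant, so $(\log h_q)' = T_{K,N,D}$ as claimed.

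The principal technical obstacle is the needle-calculus step: rigorously passing from the global weak upper gradient $|\D f|$ to the ray-by-ray Sobolev derivatives and the disintegrated energy bound. This replaces the $\Lip$-based pointwise estimate $|f_q'| \le |\nabla f|$ used in Theorem \ref{th1} and demands the full transport-set analysis on essentially non-branching $\mcp$ spaces. A secondary subtlety is excluding a positive-$\mathfrak q$-measure set of rays on which $f_q \equiv 0$: since $\int \bar f\,\d \mm = 0$ forces $f$ to change sign and the transport rays by construction connect $\{\bar f>0\}$ to $\{\bar f<0\}$, such trivial rays do not arise in the $L^1$-localization, so the rigidity conclusion holds for $\mathfrak q$-a.e.\ $q$ throughout the whole index space.
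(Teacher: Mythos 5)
Your proposal follows essentially the same route as the paper: run the $L^1$-localization of Theorem \ref{th1} on the extremal function, force equality in each step of the resulting chain, use strict monotonicity of $D' \mapsto \hat\lambda^p_{K,N,D'}$ (Proposition \ref{lemma1}, valid for $K\le 0$) to conclude $D'_q=D$, and then apply the one-dimensional rigidity Theorem \ref{th-rigidity} ray by ray. The one point where you diverge from the paper is the step you correctly identify as the principal technical obstacle, namely passing from the global weak upper gradient $|\D f|$ to ray-wise Sobolev derivatives with $|f_q'|\le |\D f|$ on $\mathfrak q$-a.e.\ ray: the paper handles this by a direct citation of Theorem 7.3 of \cite{AGS-D} (equivalence of weak gradients and restriction of Sobolev functions to curves), whereas you propose to extract it from the needle-decomposition machinery of \cite{CM-Laplacian}. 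The latter is not really where that specific statement lives; \cite{CM-Laplacian} gives the disintegration and the one-dimensional localization of $\mcp$, but the control $|\D f_q| \le |\D f|$ for Sobolev $f$ is an Ambrosio--Gigli--Savar\'e-type result, so the more economical and accurate reference is \cite{AGS-D}. With that substitution your argument matches the paper's; your additional observation that the $L^1$-transport rays connect $\{\bar f > 0\}$ to $\{\bar f < 0\}$, so that no $\mathfrak q$-positive set of trivial rays can appear, is a sensible precaution that the paper leaves implicit.
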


\begin{proof}
  Similar to the proof of Theorem \ref{th1}, we can find a 
measure decomposition  associated with $\bar f:= f|f|^{p-2}$,  such that 
\[
\mm(X \setminus \cup X_q)=0,~~~\mm=\int_{\mathfrak{Q}} \mm_q\, \d \mathfrak{q}(q)
\]
where $\mm_q=h_q \mathcal H^1 \restr{X_q}$  for some $h_q \in \mathcal F_{K, N, D'_q}$ with $D'_q\leq D$,  $\mm_q(X_q)=\mm(X)$ and
\[
\int \bar f h_q \, \d \mathcal H^1 \restr{X_q}=0
\]
for $\mathfrak{q}$-a.e. $q \in \mathfrak{Q}$.

By Theorem 7.3  \cite{AGS-D} we know  $f_q:=f \restr{X_q} \in W^{1,q}(X_q)$ and $|\D f_q| \leq |\D f|$. Then from the proof of Theorem \ref{th1} we can see 
that $ \lambda^{p, h_q}=\hat \lambda^{p}_{K, N, D}$ for $\mathfrak{q}$-a.e. $q \in \mathfrak{Q}$. By Proposition \ref{lemma1} we know that the function  $D\mapsto \hat \lambda^{p}_{K, N, D}$ is strictly decreasing, so  $D'_q= D$ and ${\rm diam}(X)=D$.
Finally,   by Theorem  \ref{th-rigidity} we  know that  $\frac{h'_q}{h_q}=T^p_{K, N, D}$.
\end{proof}

\def\cprime{$'$}

\end{document}